\newcommand{\Z}{{\mathbb Z}}
\newcommand{\R}{{\mathbb R}}
\renewcommand{\P}{{\mathbb P}}
\newcommand{\E}{{\mathbb E}}
\newcommand{\leb}{{\mathcal L}}
\newcommand{\ind}{{\mathbf 1}}
\newcommand{\red}{{\cal R}}
\newcommand{\blue}{{\cal B}}
\newcommand{\rred}{[{\cal R}]}
\newcommand{\bblue}{[{\cal B}]}
\newcommand{\mat}{{\cal M}}
\newcommand{\mhat}{{\widehat\mat}}
\newcommand{\parend}{{\hfill $\Diamond$}\par\vskip2\parsep}
\newcommand{\dof}{\bf\boldmath}
\DeclareMathOperator{\Poi}{Poi}
\newtheorem{thm}{Theorem}
\newtheorem{lemma}[thm]{Lemma}
\newtheorem{cor}[thm]{Corollary}
\newtheorem{question}{Question}
\newtheorem{obs}[thm]{Observation}
\newcounter{mycount}
\newenvironment{mylist}{\begin{list}{{\rm (\roman{mycount})}}%
{\usecounter{mycount}\setlength{\itemsep}{-3pt}\setlength{\leftmargin}{12pt}}}
{\end{list}}
\title{Geometric Properties of Poisson Matchings}
\author{Alexander E. Holroyd}
\date{1 September 2009}
\begin{document}
\maketitle

{\centering\em Dedicated to Oded Schramm, 10 December 1961 -- 1
September 2008\\}

\renewcommand{\thefootnote}{}
\footnotetext{{\bf\hspace{-6mm}Key words:} Poisson process,
point process, matching} \footnotetext{{\bf\hspace{-6mm}AMS
2010 Mathematics Subject Classifications:} 60D05, 60G55, 05C70}
\footnotetext{\hspace{-6mm}Funded in part by Microsoft and
NSERC}
\renewcommand{\thefootnote}{\arabic{footnote}}

\begin{abstract}
Suppose that red and blue points occur as independent Poisson
processes of equal intensity in $\R^d$, and that the red points
are matched to the blue points via straight edges in a
translation-invariant way.  We address several closely related
properties of such matchings. We prove that there exist matchings
that locally minimize total edge length in
$d=1$ and $d\geq 3$, but not in the strip $\R\times[0,1]$. We
prove that there exist matchings in which every bounded set
intersects only finitely many edges in $d\geq 2$, but not in
$d=1$ or in the strip.  It is unknown whether there exists a
matching with no crossings in $d=2$, but we prove positive
answers to various relaxations of this question. Several open
problems are presented.
\end{abstract}

\section{Introduction}
\label{intro}

Let $\red$ and $\blue$ be simple point processes in $\R^d$. The
{\dof support} of a point process $\Pi$ is the random set
$[\Pi]:=\{x: \Pi(\{x\})=1\}$; the elements of $[\Pi]$ are
called {\dof $\Pi$-points}.  We call $\red$-points {\dof red
points} and $\blue$-points {\dof blue points}.  A (perfect,
two-color) {\dof matching scheme} of $\red$ and $\blue$ is a
simple point process $\mat$ in $(\R^d)^2$
 such that almost surely
$(V,E)=([\red]\cup[\blue],[\mat])$ is a perfect matching of
$\rred$ to $\bblue$ (i.e.\ a bipartite graph with vertex
classes $\rred,\bblue$ and all degrees 1).  We call
$\mat$-points {\dof edges}. Similarly, we say that $\mat$ is a
{\dof partial} matching scheme if all degrees are at most 1;
and $\mat$ is a {\dof one-color} matching scheme of $\red$ if
$([\red],[\mat])$ is a.s.\ a simple undirected graph will all
degrees 1. A matching scheme $\mat$ is {\dof
translation-invariant} if the law of $(\red,\blue,\mat)$ is
invariant under all translations of $\R^d$.  We also consider
matchings of point processes on the {\dof strip}
$\R\times[0,1)$, in which case translation-invariance refers to
all translations in the first coordinate direction.

In $\R^2$ or the strip, we call a matching scheme $\mat$ {\dof
planar} if a.s.\ for any distinct matched pairs
$(r,b),(r',b')\in[\mat]$, the two closed line segments joining
$r$ to $b$ and $r'$ to $b'$ do not intersect.

We focus on the case where $\red$ and $\blue$ are independent
Poisson processes of equal intensity.  It is unknown whether
there exists a translation-invariant planar matching scheme in
$\R^2$ (see the discussion on open problems below), but the
following natural variations of this question all lead to
positive answers.

\begin{thm}[Planarity]\label{relax}
Let $\red$ and $\blue$ be independent Poisson processes of
intensity 1 in $\R^2$. The following all exist.
\begin{mylist}
\item A planar matching scheme of $\red$ to $\blue$
    that is {\em not} translation-invariant.
\item A translation-invariant planar one-color matching scheme
    of $\red$.
\item A translation-invariant planar {\em partial} matching
    scheme of $\red'$ to $\blue$, in which every blue point is matched,
    where $\red',\blue$ are independent Poisson processes of
    intensities $\lambda,1$, for any $\lambda>1$.
\item A translation-invariant process consisting of a matching
    of $\red$ to $\blue$, together with a polygonal arc in $\R^2$ joining
    each pair of matched points, such that the arcs do not
    intersect.
\end{mylist}
Furthermore, each of (i)--(iv) exists on the strip.
\end{thm}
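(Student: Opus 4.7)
The plan is to give four separate constructions for (i)--(iv), each exploiting the particular concession allowed to side-step the open problem. The underlying ingredient in all cases is the elementary fact that a minimum total-length matching of a finite equinumerous red/blue configuration in the plane is planar, by the quadrilateral inequality: any two crossing edges can be uncrossed at the cost of strictly less total length. For (i), since translation invariance is given up, I would fix the origin as a reference and construct a planar matching by taking a limit of finite minimum-weight matchings $M_n$ between the $\min(|\rred\cap B_n|,|\bblue\cap B_n|)$ reds and blues closest to the origin inside the ball $B_n$ of radius $n$. Each $M_n$ is planar, and I would argue via matching-distance tail bounds for Poisson processes in $d=2$ that each individual edge is constant in $n$ for $n$ large enough, yielding a planar matching of the whole process whose origin-dependence destroys translation invariance.

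For (ii), the one-colour case, my starting construction would be iterated mutually-nearest-neighbour pairing of $\rred$: in each round, match all pairs of remaining points that are each other's nearest remaining neighbour. Within a single round the quadrilateral inequality rules out crossings, but edges across different rounds can cross (a simple example with four points already shows this), so a twist is needed: I would refuse to commit any pairing whose edge would cross a previously committed edge, using independent $U[0,1]$ marks on the reds to break ties between competing pairings. The main obstacle for this part is to show that every red is almost surely eventually matched in this modified procedure; I would prove this by a mass-transport argument together with the integrability of the nearest-neighbour distance.

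For (iii), the surplus red mass lets me work cell-by-cell in the Voronoi tessellation of $\bblue$. The base construction matches each blue $b$ to the nearest red of $[\red']$ inside its own Voronoi cell $V_b$, giving non-crossing segments by convexity of the cells. The difficulty is that a positive density of cells contain no red, which I would resolve by a translation-invariant ``stealing'' cascade along the Delaunay graph of $\bblue$, in which deficit cells import surplus reds from neighbouring cells via a mass-transport balancing scheme. Since the expected red count per cell equals $\lambda>1$, surplus strictly exceeds deficit and the cascade terminates almost surely in finite clusters of Delaunay-adjacent cells, within each of which the finite min-weight matching fact supplies non-crossing segments. The main technical obstacle is ensuring that straight-line edges of different clusters do not cross each other, which I would handle by choosing the stealing rule so that each cluster confines its routing to a region disjoint from the regions of other clusters.

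For (iv), I would start from any translation-invariant matching of $\red$ to $\blue$ (for instance the Gale--Shapley stable matching); its straight-line drawing has a translation-invariant and almost surely locally finite crossing set. I would replace each segment by a polygonal arc making a small translation-invariant deterministic detour at each crossing, using a canonical tie-breaking rule (say, lexicographic on the matched pairs) to decide which arc passes above. With sufficiently small detours the resulting arcs are pairwise disjoint. Finally, the strip versions of all four parts follow verbatim, since every construction above is local in nature and only horizontal translation invariance is demanded in that setting.
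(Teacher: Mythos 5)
Your overall architecture is inverted relative to what actually works, and several of your key steps are gaps rather than proofs. The paper's route is to construct everything on the strip first, where the difference $F(t)=(\red-\blue)\big((0,t]\times[0,1)\big)$ is a one-dimensional random walk whose zero set (for (i)), whose cut-times in the drifted case (for (iii)), and whose excursion structure (for (iv)) decompose the strip into finite balanced blocks; planarity inside each block comes from Observation \ref{obs} applied to the finite minimum matching, and the $\R^2$ case is then obtained for free by stacking independent copies of the strip construction in the strips $\R\times[i,i+1)$ and applying a uniform vertical shift. Your constructions are intrinsically two-dimensional, and your closing claim that the strip versions ``follow verbatim'' is the one direction that is \emph{not} automatic.

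The most serious individual gap is in (i): the assertion that each edge of the minimum matchings $M_n$ in growing balls ``is constant in $n$ for $n$ large enough'' is exactly the difficulty the paper flags for such limiting schemes (the partner of a point may go to infinity), and is essentially Question \ref{noninv}; no matching-distance tail bound in $d=2$ delivers stabilization --- AKT-type results show the average edge length of the finite minimum matching grows like $\sqrt{\log n}$, so there is no reason for local constancy. In (iv) your starting point is unsubstantiated: by Theorem \ref{expected} every translation-invariant matching scheme in $\R^2$ has infinite expected number of edges crossing a bounded set, and you give no argument that the Gale--Shapley matching has an a.s.\ locally finite set of segment crossings; without that, ``a small detour at each crossing'' is not even well defined, and even granting it, the detoured arcs can create new intersections that your tie-breaking rule does not control. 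In (ii) and (iii) the termination and inter-cluster non-crossing claims are asserted rather than proved: refusing pairings that would cross committed edges can strand points behind barriers of earlier edges, and the a.s.\ finiteness of the stealing clusters is a nontrivial stable-allocation-type fact. By contrast, the paper's strip constructions make all of these issues disappear --- matching consecutive red points in $x$-order for (ii) is trivially planar, and the nested-or-disjoint interval structure of the Meshalkin matching makes the arc construction in (iv) explicit.
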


The following concept has close connections with planarity. We
call a matching scheme $\mat$ {\dof minimal} if a.s.\ every
finite set of edges is matched in a way that minimizes total
edge length, i.e.\ for any
$\{(r_i,b_i)\}_{i=1,\ldots,n}\subset[\mat]$ (where
$r_i\in\rred$ and $b_i\in\bblue$ for each $i$), we have
$$\sum_i |r_i-b_i| = \min_{\sigma} \sum_i |r_i-b_{\sigma(i)}|,$$
where the minimum is over all permutations $\sigma$ of
$1,\ldots,n$, and $|\cdot|$ denotes the Euclidean norm.  An
elementary argument (see the discussion below) shows that any
minimal matching scheme in $\R^2$ is planar.  However, the
concept of minimality is natural in all dimensions, and we have
the following surprising facts.

\begin{thm}[Minimality]\label{minimal} Let $\red$ and $\blue$
be independent Poisson processes of intensity 1.
\begin{mylist}
\item There exists a translation-invariant minimal matching of
    $\red$ to $\blue$ in $\R^d$ for $d=1$ and all $d\geq 3$.
\item There does not exist a translation-invariant minimal
    matching of $\red$ to $\blue$ in the strip.
\end{mylist}
\end{thm}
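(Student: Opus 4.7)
For part (i) with $d=1$, I would use the canonical translation-invariant \emph{non-crossing matching} on the line: pair red and blue points according to the two-sided ``parenthesis'' rule determined by the signed count process $S(x) = R((-\infty,x]) - B((-\infty,x])$. Recurrence of $S$ for equal-intensity Poisson on $\R$ ensures the matching is well-defined and almost surely perfect, and translation invariance is manifest. In one dimension, non-crossing is equivalent to minimal (any crossing of two edges on a line can be uncrossed, strictly reducing total length), so this matching is minimal.

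For part (i) with $d \geq 3$, I would construct the matching as a weak subsequential limit of finite minimum-weight matchings. Let $\mat_n$ be the minimum-weight bipartite matching between the (count-balanced) restrictions of $\red$ and $\blue$ to $B_n$; after a uniform random shift over a growing window (or Palm-ification) to restore translation invariance, extract a subsequential weak limit $\mat_\infty$. Three ingredients are needed: \emph{tightness}, via comparison with a known translation-invariant matching in $d \geq 3$ having finite mean edge length (e.g.\ the Gale--Shapley stable matching of Hoffman--Holroyd--Peres or the schemes of Holroyd--Pemantle--Peres--Schramm), since minimality of $\mat_n$ bounds its length in any window by that of the reference matching; \emph{perfectness of the limit}, since a uniform-in-$n$ tail bound on the edge at a typical red point prevents any red point from being sent to infinity; and \emph{minimality of the limit}, since any finite rearrangement of $\mat_\infty$ that strictly reduced total length would, by weak convergence, yield the same improvement in $\mat_n$ for all large $n$, contradicting the choice of $\mat_n$.

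For part (ii), I would argue by contradiction. Suppose $\mat$ is a translation-invariant minimal matching on $\R \times [0,1)$, and let $F$ denote the flux across $\{0\} \times [0,1)$, that is, the number of $\mat$-edges whose segments meet this vertical segment. A Campbell/mass-transport calculation identifies $\E F$ with the expected horizontal extent of the edge at a typical red point. On the other hand, for any $T > 0$, perfectness forces the combined flux at $\{0\}$ and at $\{T\}$ to be at least the red/blue imbalance in the window $[0,T] \times [0,1)$; taking expectations and using translation invariance yields $\E F \geq \tfrac{1}{2}\,\E\bigl|R([0,T]{\times}[0,1)) - B([0,T]{\times}[0,1))\bigr| \sim c\sqrt{T}$, so $\E F = \infty$.

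The main technical obstacle is converting $\E F = \infty$ into an outright contradiction, since infinite expected flux is compatible with translation invariance in general (it already occurs in $d=1$). Here I would exploit minimality together with the fact that a minimal matching in $\R^2$ is necessarily planar. In a planar matching on the strip, infinite flux across a vertical line forces the crossings of that line to accumulate at some height $y^\ast \in [0,1]$, and the corresponding endpoints must then escape along the strip. A careful swap/rearrangement argument on finite subsets of these long crossing edges, using minimality and the convex-quadrilateral inequality together with the constrained planar geometry inside a strip of width one, should produce either an unmatched point (contradicting perfectness) or a finite local rearrangement that strictly reduces total length (contradicting minimality). This final quantitative step --- translating ``infinite flux plus planarity in a strip'' into a violation of minimality on a finite subset --- is the true crux of the argument.
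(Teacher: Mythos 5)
Your constructions are essentially the right ones, but two of the three arguments have genuine gaps. For $d=1$ you propose the correct matching (the paper uses exactly the parenthesis/excursion matching built from the two-sided walk $F$), but your justification --- that in one dimension non-crossing is equivalent to minimal, so uncrossing settles it --- is false. Take a red point at $0$, a blue at $1$, a red at $2$ and a blue at $3$: the nested matching $(0,3),(2,1)$ is non-crossing (as arcs, or as intervals) but has total length $4$, while $(0,1),(2,3)$ has total length $2$. What actually proves minimality is a crossing-count identity: writing $h_m(t)$ for the number of edges of a matching $m$ whose interval covers $t$, the total length equals $\int h_m(t)\,dt$; every matching of the points in a ``closed'' interval $(u,v)$ satisfies $h_m(t)\geq F(t)-F(u)$ (the red-minus-blue excess to the left of $t$), and the parenthesis matching achieves equality for every $t$, hence minimizes the integrand pointwise. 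You need this or an equivalent argument; uncrossing alone neither strictly decreases length in one dimension nor characterizes the minimum.

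For $d\geq 3$ your route (minimum matchings in boxes, random shift, weak subsequential limit) is genuinely different from the paper's, which takes a minimizing sequence of \emph{translation-invariant} schemes for the mean edge length $\eta$, shows a weak limit $\mhat$ attains $\inf\eta$, and then shows an $\eta$-minimizer must be minimal by rematching inside cubes. Your plan is plausible, but the tightness step as stated has a hole: the minimum matching on $B_n$ is not obviously dominated by a translation-invariant reference matching ``restricted to $B_n$,'' since the reference matching's edges leave $B_n$ and the boundary points must be accounted for; this is precisely the difficulty the $\eta$-minimization formulation sidesteps. The most serious problem is part (ii), where you correctly identify the crux and then do not supply it. Two things are missing. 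First, your Campbell computation gives only $\E F=\infty$; to run any accumulation argument you need that with positive probability some bounded set is crossed by \emph{infinitely many} edges, and infinite expectation does not imply this. The paper obtains the almost-sure statement (Theorem~\ref{locfin}(i)) from ergodic decomposition together with null-recurrence of the walk $F$: if only $k$ edges crossed a typical fiber, the set of such fibers would have positive density while being confined to a bounded range of $F$-values, which null-recurrence forbids. Second, the swap argument on long crossing edges that you say ``should'' yield a contradiction is the actual content of the paper's Theorem~\ref{cond}: one fixes an accumulation direction $\ell$ of the directions of the crossing edges (necessarily horizontal in the strip), fixes one edge $(r,b)$ not parallel to $\ell$, and shows that for a crossing edge $(r',b')$ making angle less than $\theta/2$ with $\ell$ and with endpoints outside $B(t)$ for $t$ large, $|r-b'|+|r'-b|<|r-b|+|r'-b'|$, because the length difference converges (uniformly over lines meeting $S$) to the projection of $(r,b)$ onto the line through $(r',b')$, which is at most $|r-b|\cos(\theta/2)<|r-b|$. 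Planarity is not needed for this. Without that estimate your part (ii) is a statement of intent rather than a proof.
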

The remaining case of $\R^2$ is open.

Next, we say that an edge $(r,b)\in[\mat]$ {\dof crosses} a set
$S\subset\R^d$ if the closed line segment from $r$ to $b$
intersects $S$.  We call a matching scheme $\mat$ {\dof locally
finite} if a.s., every bounded set $S\subset\R^d$ is crossed by
only finitely many edges.

\begin{thm}[Local finiteness]\label{locfin}Let $\red$ and $\blue$
be independent Poisson processes of intensity 1.
\begin{mylist}\sloppy
\item There does not exist a translation-invariant locally
    finite matching scheme in $\R$, nor in the strip.
\item There exist translation-invariant locally
    finite matching schemes in $\R^d$ for all $d\geq 2$.
\end{mylist}
\end{thm}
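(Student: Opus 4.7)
I would prove (i) by contradiction, extracting from a hypothetical translation-invariant locally finite matching a stationary $\Z$-valued process on $\R$ whose increments cannot be tight; and prove (ii) by exhibiting a translation-invariant matching with finite expected matched distance and bounding expected crossings via the Campbell / Mecke formula.

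For (i) in $\R$: given $\mat$ locally finite and translation-invariant, set
\[
F(x) := \#\{(r,b)\in[\mat]:r\le x<b\}-\#\{(r,b)\in[\mat]:b\le x<r\}.
\]
Local finiteness applied to the singleton $\{x\}$ makes $F(x)$ a.s.\ a well-defined integer; since a bounded interval contains only finitely many Poisson points, $F$ is in fact a.s.\ simultaneously finite at every $x$. Translation invariance of $(\red,\blue,\mat)$ then makes $(F(x))_{x\in\R}$ stationary. The key step is a four-subcase analysis (red vs.\ blue point; matched partner to the right vs.\ to the left) which shows that $F$ jumps by exactly $+1$ at every red point and by $-1$ at every blue point, yielding the conservation identity
\[
F(y)-F(x) \;=\; \#([\red]\cap(x,y])-\#([\blue]\cap(x,y]),\qquad y>x.
\]
The right side is a difference of two independent $\mathrm{Poisson}(y-x)$ variables, of variance $2(y-x)\to\infty$. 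But stationarity of $F$ gives $\P(|F(L)-F(0)|>M)\le 2\P(|F(0)|>M/2)$, which tends to $0$ as $M\to\infty$ uniformly in $L$; so $\{F(L)-F(0)\}_{L\ge 0}$ is tight---contradicting the divergent variance. On the strip the argument is identical after projecting edges to their first coordinates: the projected Poisson processes are independent Poisson$(1)$ on $\R$, and convexity of the strip ensures that an edge crosses the vertical segment $\{x\}\times[0,1)$ iff its endpoints' first coordinates straddle $x$.

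For (ii) in $d\ge 2$: take any translation-invariant matching $\mat$ with $\E^{\red}_0|M|<\infty$, where $M$ is the vector from the origin to its match under the red Palm measure $\P^{\red}_0$ (e.g.\ the Gale--Shapley stable matching, whose finite expected matched distance in $d\ge 2$ is known from prior work). For a bounded $S\subset\R^d$, the Mecke formula and translation to the Palm measure give
\[
\E\,\#\{(r,b)\in[\mat]:[r,b]\cap S\ne\emptyset\} \;=\; \E^{\red}_0\,\leb\bigl\{r\in\R^d:[r,r+M]\cap S\ne\emptyset\bigr\},
\]
and the set in braces is a Minkowski tube around the segment $[0,M]$, of volume at most $C_d(\diam(S)^d+\diam(S)^{d-1}|M|)$ by a Steiner-type bound. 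Taking expectation and using $\E^{\red}_0|M|<\infty$ yields finite expected crossings, hence a.s.\ local finiteness.

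I expect the main obstacle to be the jump-identity step in (i): one must check in all four subcases that the signs conspire to produce $+1$ at every red point and $-1$ at every blue point with no residual term. Once that identity is established the tightness-vs.-divergent-variance contradiction is immediate, and (ii) is essentially routine given a cited input matching with finite first moment in $d\ge 2$.
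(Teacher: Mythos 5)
Your argument for part (i) is correct, and it takes a genuinely different route from the paper. The paper fixes a value $k$ attained with positive probability by the (finite) number of edges crossing a vertical segment, passes to an ergodic component, and shows that the set of locations crossed by exactly $k$ edges would have positive density while being confined to a bounded window of the null-recurrent walk $F$ --- a contradiction via the ergodic theorem. You instead work with the \emph{signed} crossing number, observe that local finiteness makes it an a.s.\ finite stationary process, and derive the conservation identity $F(L)-F(0)=\red((0,L])-\blue((0,L])$, so that tightness of increments of a stationary finite-valued process collides with the divergent variance $2L$ of a difference of independent Poissons. Your four-subcase jump check does go through (the edge incident to a red point contributes $+1$ whether its partner lies to the left or the right, and a.s.\ only finitely many edges cross $[0,L]$, so the telescoping is a finite sum), and the projection argument on the strip is fine. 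This version avoids the ergodic decomposition entirely and is, if anything, cleaner than the paper's; both proofs ultimately rest on the same walk $F$.

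Part (ii), however, has a fatal gap, and it is concentrated exactly in the critical case $d=2$. Your plan requires an input matching with $\E^{\red}_0|M|<\infty$, but by Theorem \ref{finite-mean} (quoted in the paper from \cite{hpps}) \emph{no} translation-invariant matching scheme of two independent intensity-$1$ Poisson processes in $\R^2$ has finite mean edge length; in particular the Gale--Shapley matching does not (you may be recalling the one-color result, which is a different problem). Worse, the obstruction is not just to your choice of input: Theorem \ref{expected} states that in $d=2$ the number of edges crossing any fixed bounded set has \emph{infinite expectation} for every translation-invariant scheme, so no first-moment/Mecke bound of the kind you propose can ever establish local finiteness in the plane. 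Local finiteness in $d=2$ must be proved as an almost-sure statement for a random variable with infinite mean, which is why the paper resorts to an explicit multiscale construction (nested blocks with heirs, a Poisson large-deviation estimate, and Borel--Cantelli) rather than a moment bound. Your Campbell/Steiner computation is sound and would correctly yield part (ii) for $d\geq 3$, where matchings with finite mean edge length do exist, but the theorem's content in $d=2$ is precisely what it cannot reach.
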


Finally, we establish the following conditional result.
\begin{thm}[Minimal implies locally finite]\label{cond}
Let $\red$ and $\blue$ be independent Poisson processes of
intensity 1 in $\R^d$, where $d\geq 2$.  Any translation-invariant minimal
matching scheme must be locally finite.
\end{thm}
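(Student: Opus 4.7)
The plan is to argue by contradiction. Suppose $\mat$ is translation-invariant and minimal in $\R^d$, $d\geq2$, but not locally finite. By translation-invariance and countable monotonicity, there is some $\rho>0$ such that $B(0,\rho)$ is crossed by infinitely many edges with positive probability. Since $[\red]\cup[\blue]$ has a.s.\ only finitely many points in $B(0,2\rho)$, on this event infinitely many edges $(r_i,b_i)$ cross $B(0,\rho)$ with both endpoints outside $B(0,2\rho)$; each such edge then has ``tails'' of length at least $\rho$ on either side of the ball.

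The key geometric step I would prove is a swap lemma: for two such long crossing edges, if the signed unit directions $u_i:=(b_i-r_i)/|b_i-r_i|$ are sufficiently close to being antiparallel, then the minimality swap inequality $|r_1-b_1|+|r_2-b_2|\leq|r_1-b_2|+|r_2-b_1|$ is strictly violated. Indeed, in the exactly antiparallel case $u_2=-u_1=:u$, all four endpoints have transverse component (perpendicular to $u$) of norm at most $\rho$, while $r_1,b_2$ have $u$-coordinate $\leq -\rho$ and $r_2,b_1$ have $u$-coordinate $\geq\rho$; writing these coordinates as $-A,-C$ and $B,D$ with $A,B,C,D\geq\rho$, one computes a swap gain of at least $2\min(A,C)+2\min(B,D)-O(\rho^2/\min)\geq 3\rho$. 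A continuity/perturbation argument should then extend the strict inequality to a nearly antiparallel regime.

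To produce such a pair, I would exploit the exchangeability of $\red$ and $\blue$: replacing $\mat$ by an equiprobable mixture of itself with its red--blue color-swap preserves translation-invariance and a.s.\ minimality (which depends only on the unordered edge set), and the signed-direction distribution of a Palm-typical edge is now invariant under $u\mapsto -u$. Combining this $\pm$-symmetry with a mass-transport (flux) identity across a generic hyperplane through $B(0,\rho)$---expressing that in a color-symmetric translation-invariant matching the expected signed flux of edges across any oriented hyperplane must vanish---should force both some accumulation direction $u^*\in S^{d-1}$ and its antipode $-u^*$ to be accumulation points of the signed directions of the infinitely many long crossings, yielding the required nearly antiparallel pair and completing the contradiction.

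I expect the main obstacle to be this last step: converting the distributional $\pm$-symmetry into an almost-sure statement about antipodal accumulation in a single realization, and arguing that the flux identity really enforces \emph{local} balance in the set of directions of edges through the chosen small ball (rather than being balanced only globally by far-away edges elsewhere). The resolution likely requires invoking ergodicity of an extremal translation-invariant matching and a carefully chosen Palm computation. By comparison, the swap lemma and its perturbation are essentially elementary geometry and should present no serious difficulty.
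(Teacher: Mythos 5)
The step you flag as the main obstacle is indeed a genuine gap, and I do not see how to close it along the route you propose. The color-swap symmetrization only makes the \emph{law} of the oriented edge-direction invariant under $u\mapsto -u$; it says nothing about a single realization, in which all of the infinitely many long edges through $B(0,\rho)$ could perfectly well point the same way (red endpoint to the left, blue to the right, say). The flux identity does not rescue this: when infinitely many edges cross a bounded set the signed flux through a hyperplane is not absolutely summable (indeed by Theorem \ref{expected} the expected number of crossings is already infinite in $d=2$), so there is no well-defined local balance to exploit, and even a global expectation identity would not localize to the directions of edges through one small ball. So your argument produces no nearly antiparallel pair, and the contradiction never materializes.

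The paper's proof sidesteps this entirely by not requiring two long edges at all. It takes the accumulation direction $\ell$ of the crossing edges in \emph{projective} space (unoriented lines), picks a single \emph{fixed} edge $(r,b)$ of the matching whose direction differs from $\ell$ by an angle $\theta>0$ (such an edge exists because the Poisson points are a.s.\ parallel-free, Lemma \ref{parallel}), and swaps it with one long crossing edge $(r',b')$ whose endpoints lie outside $B(t)$ and whose direction is within $\theta/2$ of $\ell$. As $t\to\infty$ the quantity $|r-b'|+|r'-b|-|r'-b'|$ converges, uniformly over lines through $S$, to $\pm|r-b|\cos\alpha$ with $\alpha>\theta/2$, which is strictly less than $|r-b|$ \emph{regardless of the orientation} of the long edge; hence the swap strictly reduces total length and contradicts minimality. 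This is why no antipodal accumulation, color symmetrization, or ergodic/Palm argument is needed. Your swap lemma for antiparallel pairs is correct but solves a harder matching problem than the one that is actually required.
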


Note that Theorems \ref{minimal}(i) and \ref{locfin}(i) show
that the assertion of Theorem \ref{cond} fails in $d=1$.

\subsubsection*{Motivation and open problems}

Invariant Poisson matching schemes, and particularly their
quantitative properties, were studied extensively in
\cite{hpps}.  Other related work appears in
\cite{deijfen,h-p-trees,krikun,soo,timar}.  The present work is
largely motivated by the following question, which was posed by
Yuval Peres in 2002, stated in \cite{hpps}, and remains open.

\begin{question}\label{theq}
For $\red$ and $\blue$ independent Poisson processes of
intensity 1 in $\R^2$, does there exist a translation-invariant
planar matching scheme?
\end{question}

It is far from clear what answer to guess to the above
question, with several of the results presented here perhaps
suggesting opposite answers.  We propose the following natural
variants.

\begin{question}\label{stripq}
For $\red$ and $\blue$ independent Poisson processes of
intensity 1 in the strip, does there exist a
translation-invariant planar matching scheme?
\end{question}

\begin{question}\label{minq}
For $\red$ and $\blue$ independent Poisson processes of
intensity 1 in $\R^2$, does there exist a translation-invariant
minimal matching scheme?
\end{question}

Note on the other hand that Theorem \ref{minimal}(ii) gives a
negative answer to the analogous question of minimal matchings
in the strip. The following simple consequence of the triangle
inequality implies immediately that a minimal matching in
$\R^2$ is planar, so a positive answer to Question \ref{minq}
would imply a positive answer to Question \ref{theq}.  (A
positive answer to Question \ref{stripq} would also imply a
positive answer to Question \ref{theq}, by a simple argument --
see Section \ref{strip-line}).  We say that a set of points
$K\subset\R^d$ is {\dof parallel-free} if there do not exist
$x,y,u,v\in K$ and $a\in\R$ with $\{x,y\}\neq\{u,v\}$ and
$x-y=a(u-v)\neq 0$.

\begin{obs}[Finite minimum matchings are planar]\label{obs}
Let $R,B\in\R^2$ be disjoint finite sets of equal cardinality,
and suppose $R\cup B$ is parallel-free.
Then in any perfect matching of $R$ to $B$
that minimizes the total length, the line segments joining
matched pairs do not intersect.
\end{obs}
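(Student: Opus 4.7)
The plan is a direct swap argument driven by the strict triangle inequality, with the parallel-free hypothesis used to exclude degenerate collinear configurations.

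Suppose for contradiction that in a minimum total-length matching, two distinct edges $(r_1,b_1)$ and $(r_2,b_2)$ (with $r_i\in R$, $b_i\in B$) have intersecting closed segments. First I would rule out overlap in more than a point: if the two segments lay on a common line, then $r_1-b_1$ and $r_2-b_2$ would be parallel nonzero vectors from the distinct pairs $\{r_1,b_1\}\neq\{r_2,b_2\}$, violating parallel-freeness. So the intersection is a single point $p$. Next I would check that $p$ is not an endpoint of either segment: for example, if $p=r_1$, then $r_1$ lies on the segment from $r_2$ to $b_2$, making $r_1-r_2$ and $r_2-b_2$ parallel nonzero vectors; the pairs $\{r_1,r_2\}$ and $\{r_2,b_2\}$ are distinct because $r_1\neq b_2$ (as $R\cap B=\emptyset$), again contradicting parallel-freeness. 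The other three endpoint cases are entirely analogous. Hence $p$ lies strictly in the interior of both segments.

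Now I would compare the original pair to the swap pair $(r_1,b_2),(r_2,b_1)$ via the triangle inequality:
\begin{align*}
|r_1-b_2| &\leq |r_1-p|+|p-b_2|, \\
|r_2-b_1| &\leq |r_2-p|+|p-b_1|.
\end{align*}
Since $p$ lies on $[r_1,b_1]$ and on $[r_2,b_2]$, summing and regrouping gives that the right-hand sides add to exactly $|r_1-b_1|+|r_2-b_2|$.

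The key remaining step is to establish strict inequality in at least one of these bounds. Equality in both would require $p$ to lie on each of the segments $[r_1,b_2]$ and $[r_2,b_1]$; combined with $p\in[r_1,b_1]\cap[r_2,b_2]$, this forces all four points to lie on a common line through $p$, so $r_1-b_1$ and $r_2-b_2$ are parallel nonzero vectors from distinct pairs, contradicting parallel-freeness once more. Therefore replacing the two edges $(r_1,b_1),(r_2,b_2)$ by $(r_1,b_2),(r_2,b_1)$ yields a strictly shorter perfect matching of $R$ to $B$, contradicting minimality. The only subtle point is the bookkeeping for degenerate configurations (overlapping segments, intersections at endpoints, full collinearity), and parallel-freeness dispatches all of them uniformly.
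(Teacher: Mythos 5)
Your proof is correct and follows essentially the same route as the paper's: assume two edges of a minimizing matching intersect, use parallel-freeness to force the intersection to be a single point interior to both segments, and then apply the triangle inequality to show the swapped matching $(r_1,b_2),(r_2,b_1)$ is strictly shorter. The paper compresses the degenerate-case bookkeeping and the strictness of the inequality into a reference to a figure, whereas you spell these steps out explicitly; the substance is identical.
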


In the light of this observation, the following possible
approach to constructing a translation-invariant planar
matching seems natural.  Take $n$ red and $n$ blue points
uniformly at random in a square of area $n$, randomly
translated so that the origin is uniformly distributed in the
square. Consider the matching of minimum total length, and take
suitable a limit in distribution as $n\to\infty$. For such an
approach to be successful, the limit must be a genuine matching
-- it is possible that instead the partner of a point goes to
infinity. If it exists, the limiting matching would be minimal.
This motivates Question \ref{minq}.  We will employ a somewhat
similar limiting argument in the proof of Theorem
\ref{minimal}(i) in $d\geq 3$.

Question \ref{minq} remains open if the
invariance requirement is dropped.
\begin{question}\label{noninv}
For $\red$ and $\blue$ independent Poisson processes of
intensity 1 in $\R^2$, does there exist a minimal matching
scheme?
\end{question}

The notion of locally finite matching (see Theorem
\ref{locfin}) becomes particularly interesting in $\R^2$, owing
to the following result proved in \cite[Proof of Theorem 2,
 case $d=2$]{hpps}.
\begin{thm}[Infinite mean crossings; \cite{hpps}]\label{expected}
Let $\red$ and $\blue$ be independent Poisson processes of
intensity 1 in $\R^2$.  In any translation-invariant matching
scheme, for any fixed bounded set $S\subset\R^2$, the number of
edges that cross $S$ has infinite expectation.
\end{thm}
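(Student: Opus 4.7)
The plan is to reduce the problem to the known fact that in $\R^2$ the Palm matching distance has infinite expectation, and then convert this into infinite expected crossings by a simple averaging over translates of $S$. Let $X$ denote the Euclidean distance from the origin to its matched partner under the reduced Palm distribution of $\red$ at $0$. The main input, which I would take as a black box, is that in $\R^2$ any translation-invariant matching of two independent intensity-$1$ Poisson processes has $\E X = \infty$; this is essentially the content of \cite{h-p-trees}.

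Write $N(S)$ for the number of edges crossing $S$. Since $N$ is monotone in $S$, it suffices to treat the case that $S$ is a disk of radius $\epsilon > 0$ centered at $0$ (any bounded $S$ with non-empty interior contains such a disk up to translation). For $R$ much larger than $\epsilon$, I would compute in two ways the integral
$$I_R := \int_{[-R,R]^2} \E N(S+v)\, dv.$$
Translation invariance gives $I_R = (2R)^2\, \E N(S)$. By Fubini,
$$I_R = \E \sum_{(r,b) \in [\mat]} \leb\bigl(\{v \in [-R,R]^2 : \overline{rb} \cap (S+v) \neq \emptyset\}\bigr),$$
and the inner set is the intersection of $[-R,R]^2$ with the Minkowski sum $\overline{rb} \oplus S$, which is a stadium around $\overline{rb}$ of total area $\pi\epsilon^2 + 2\epsilon\,|r-b|$.

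Next I restrict the sum to edges whose red endpoint $r$ lies in $[-R/4, R/4]^2$ and whose length $|r-b|$ lies in $[\epsilon, R/4]$. These constraints force the blue endpoint into $[-R/2, R/2]^2$ and place the entire stadium inside $[-R, R]^2$, so its full area (at least $2\epsilon\,|r-b|$) contributes. The Campbell--Mecke formula applied to the Palm distribution of $\red$ at $0$ then evaluates the resulting restricted expectation as $(R/2)^2 \cdot \E\bigl[X\, \ind\{\epsilon \leq X \leq R/4\}\bigr]$. Combining with $I_R = (2R)^2\, \E N(S)$ yields
$$\E N(S) \geq \frac{\epsilon}{8}\, \E\bigl[X\, \ind\{\epsilon \leq X \leq R/4\}\bigr],$$
and letting $R \to \infty$ gives $\E N(S) = \infty$ since $\E X = \infty$.

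The main obstacle is the input $\E X = \infty$ in $\R^2$; this is the nontrivial ingredient, proved in \cite{h-p-trees} by a mass-transport argument exploiting that the Poisson discrepancy $\red([-R,R]^2) - \blue([-R,R]^2)$ is typically of order $R$, which is incompatible with a translation-invariant matching whose typical displacement has finite mean. Given that input, the averaging displayed above is routine: it simply converts infinite expected edge length per unit area into infinitely many expected crossings of any bounded set of positive measure.
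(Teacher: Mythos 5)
The paper does not actually prove Theorem \ref{expected}: it is imported wholesale from \cite[proof of Theorem 2, case $d=2$]{hpps}, so there is no internal argument to compare yours against. Your derivation is essentially correct, and it has the attractive feature of being self-contained relative to results already quoted in this paper: the black box you need is exactly Theorem \ref{finite-mean} (the correct citation is \cite[Theorem 1]{hpps}, not \cite{h-p-trees}), since in the paper's notation $\eta(\mat)=\E|\mat^*(0)|=\E X$, and Theorem \ref{finite-mean} says this is infinite for every translation-invariant scheme in $d=2$. The averaging step is sound: all integrands are non-negative so Tonelli applies; the stadium area $\pi\epsilon^2+2\epsilon|r-b|$ is right; the restriction to edges with $r\in[-R/4,R/4]^2$ and $|r-b|\le R/4$ does force the whole stadium into $[-R,R]^2$; and the Palm--Campbell evaluation of the restricted sum is correct (you drop a factor of $2\epsilon$ in one display, but it reappears in the final inequality $\E N(S)\ge\tfrac{\epsilon}{8}\,\E\bigl[X\ind\{\epsilon\le X\le R/4\}\bigr]$, so the arithmetic is consistent). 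Monotone convergence then finishes the proof. In fact the $R$-truncation is unnecessary: the same Campbell computation gives the exact identity $\E N(B(\epsilon))=\pi\epsilon^2+2\epsilon\,\E X$, which makes transparent that the crossing statement and the mean-length statement are equivalent for sets with non-empty interior.

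Two caveats. First, the reduction to a disk requires $S$ to have non-empty interior; you should state this hypothesis explicitly. (It is surely the intended reading of the theorem, which as literally stated fails for a singleton $S$, where the number of crossings is a.s.\ zero.) Second, and more substantively: because in \cite{hpps} the crossing statement is established \emph{inside} the proof of the mean-length lower bound, you should check that the input $\E X=\infty$ in $d=2$ has a proof that does not itself pass through Theorem \ref{expected}; otherwise your argument records a (correct and useful) equivalence rather than an independent proof. Granting Theorem \ref{finite-mean} as this paper does, your derivation is a valid and clean proof of Theorem \ref{expected}.
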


Notwithstanding Theorem \ref{locfin}(ii), one might speculate
that if a {\em minimal} matching scheme exists in $\R^2$, the
infinite expectation in Proposition \ref{expected} should in
some sense be ``spread around evenly'', so that the matching is
not locally finite.  Combined with Theorem \ref{cond}, this
perhaps suggests a negative answer to Question \ref{minq}.

Returning to the issue of planarity, we propose the following
question.
\begin{question}
Do there exist jointly ergodic point processes $\red$ and
$\blue$ in $\R^2$, both of intensity 1, for which there is
(provably) no planar translation-invariant matching scheme?
\end{question}

Finally, note that our definition of a matching scheme requires
only that $\red,\blue,\mat$ are all defined on some joint
probability space, so the matching may involve additional
randomization besides that of the red and blue processes.  If
instead $\mat=f(\red,\blue)$ for some deterministic function
$f$, the matching scheme is called a {\dof factor}.  See e.g.\
\cite{hpps} for more on this distinction.  Most of the matching
schemes we construct will not be factors.  Another interesting
line of enquiry (which we do not pursue here) is to determine
whether there exist factor matching schemes satisfying the
various conditions under consideration.

\subsubsection*{Some notation}

We write $\leb$ for Lebesgue measure on $\R^d$, and $|\cdot|$
for the Euclidean norm.  The ball is denoted
$B(r):=\{x\in\R^d:|x|<r\}$.  If $\mat$ is a matching scheme, we
write $\mat(x)$ for the {\dof partner} of a red or blue point
$x$, i.e.\ the unique point such that $(x,\mat(x))\in[\mat]$.
Similarly in a deterministic matching $m$ we write $m(x)$ for
the partner of $x$.

\section{Finite matchings}

In this section we verify some elementary facts.

\begin{proof}[Proof of Observation \ref{obs}]\sloppy
Suppose on the contrary that, in some length-minimizing
matching, two edges intersect.  By the parallel-free
assumption, they must intersect non-trivially, i.e.\  in a
single point that is not one of their endpoints. But now an
application of the triangle inequality shows that the other
possible matching of these four points has strictly smaller
total length; see Figure \ref{uncross}.
\end{proof}
\begin{figure}
\centering
\begin{picture}(0,0)(0,0)
\put(5,15){$r$}
\put(105,5){$r'$}
\put(75,110){$b'$}
\put(120,50){$b$}
\put(77,46){$x$}
\end{picture}
\resizebox{1.6in}{!}{\includegraphics{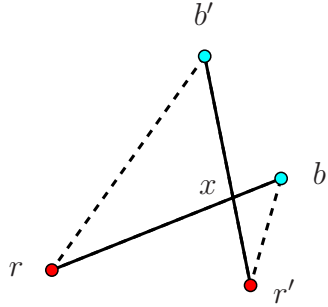}}
\caption{Uncrossing a pair of edges: the triangle inequality gives
 $rb'+r'b<rx+xb'+r'x+xb=rb+r'b'$.}
\label{uncross}
\end{figure}

\begin{lemma}\label{parallel}
In a homogeneous Poisson process $\Pi$ in $\R^d$ with $d\geq
2$, $[\Pi]$ is a.s.\ parallel-free.
\end{lemma}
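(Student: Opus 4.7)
The plan is to classify the ways in which the parallel-free condition can fail, show each failure condition corresponds to a measure-zero set in a Cartesian power of $\R^d$ (using $d\geq 2$), and then apply the multivariate Mecke (Campbell) formula for the Poisson process to deduce that almost surely no such configuration occurs among the points of $[\Pi]$.

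First I would observe that a violation of parallel-freeness involves points $x,y,u,v \in [\Pi]$ with $\{x,y\}\neq\{u,v\}$ and $x-y=a(u-v)\neq 0$. Depending on whether $\{x,y\}$ and $\{u,v\}$ share an element, this splits into two cases:
\begin{mylist}
\item \textbf{Three collinear distinct points:} after relabelling, $y=v$, giving three distinct points $x,y,u$ lying on a common line.
\item \textbf{Four distinct points with parallel difference vectors:} $\{x,y\}\cap\{u,v\}=\emptyset$, and $x-y$ is a nonzero scalar multiple of $u-v$.
\end{mylist}
Let $A_3\subset(\R^d)^3$ and $A_4\subset(\R^d)^4$ denote the corresponding bad subsets.

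Next I would verify that $A_3$ and $A_4$ have Lebesgue measure zero in $d\geq 2$. For $A_3$: conditional on $x,y$ distinct, the set of $z$ with $x,y,z$ collinear is the affine line through $x,y$, which has $d$-dimensional Lebesgue measure zero when $d\geq 2$; Fubini then gives $\leb^{3d}(A_3)=0$. For $A_4$: conditional on distinct $x,y$ and on $u$, the constraint forces $v$ to lie on an affine line through $u$ parallel to $x-y$, again of measure zero; Fubini gives $\leb^{4d}(A_4)=0$.

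Now I would invoke the multivariate Mecke formula for the Poisson process $\Pi$ of intensity $\lambda$: for $k\in\{3,4\}$ and any nonnegative measurable $f$ on $(\R^d)^k$,
$$\E\!\!\sum_{\substack{(x_1,\ldots,x_k)\in[\Pi]^k\\ \text{distinct}}}\!\! f(x_1,\ldots,x_k)=\lambda^k\int_{(\R^d)^k} f\,d\leb^{kd}.$$
Taking $f=\ind_{A_k\cap B(R)^k}$ yields expected count zero, hence almost surely no bad $k$-tuple of distinct $\Pi$-points lies in $B(R)^k$. Letting $R\to\infty$ along the integers (a countable union of null events remains null) shows $[\Pi]$ contains no bad triple and no bad quadruple almost surely, which is precisely parallel-freeness.

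There is no real obstacle here: the argument is a routine combination of Fubini and the Mecke/Campbell formula. The only mild care needed is to enumerate the failure modes correctly (so that the case $\{x,y\}\cap\{u,v\}\neq\emptyset$ is not overlooked), and to reduce to bounded regions before applying Mecke so that the zero-measure computation gives a genuine zero on both sides.
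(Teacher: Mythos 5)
Your proof is correct and is essentially the paper's argument in slightly different clothing: the paper reduces to a ball, conditions on the number of points so that they become i.i.d.\ uniform, and then makes the same Fubini-type null-set observation, whereas you package that step via the multivariate Mecke formula. Your explicit split into the three-collinear-points case and the four-points-with-parallel-differences case is exactly what the paper's check of pairwise non-parallelism of $x_1-x_2$, $x_1-x_3$, $x_3-x_4$ encodes, so no gap.
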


\begin{proof}
It is enough to check this for the Poisson process restricted
to a ball, and in this case we may also condition on the number
of points in the ball.  So it suffices to check that if
$x_1,\ldots,x_4$ are independent uniform points in a fixed
ball, then a.s.\ the vectors $x_1-x_2$, $x_1-x_3$ and $x_3-x_4$
are pairwise non-parallel, which is elementary.
\end{proof}

As remarked earlier we can deduce the following.
\begin{cor}[Minimal implies planar]
For $\red$ and $\blue$ independent Poisson processes of
intensity 1 in $\R^2$, any minimal matching scheme (if such
exists) is planar.
\end{cor}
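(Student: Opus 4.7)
The plan is to assemble the corollary directly from the two preceding results: Observation \ref{obs} (length-minimizing matchings of finite parallel-free point sets are planar) and Lemma \ref{parallel} (the support of a Poisson process in dimension at least $2$ is a.s.\ parallel-free).

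First I would fix a minimal matching scheme $\mat$ of $\red$ to $\blue$ in $\R^2$, and let $\Pi:=\red+\blue$, which is a Poisson process of intensity $2$. By Lemma \ref{parallel} applied to $\Pi$, the set $[\red]\cup[\blue]=[\Pi]$ is almost surely parallel-free. Work on the full-measure event on which this holds, and on which moreover $\mat$ is a length-minimizing matching in the sense of the definition of minimality.

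Next, I would take two arbitrary distinct edges $(r_1,b_1),(r_2,b_2)\in[\mat]$ and apply the minimality condition with $n=2$: the matching $\{(r_1,b_1),(r_2,b_2)\}$ must minimize the total length among the two possible perfect matchings of $\{r_1,r_2\}$ to $\{b_1,b_2\}$. The four-point set $\{r_1,r_2,b_1,b_2\}$ is parallel-free, being a subset of $[\Pi]$. Observation \ref{obs}, applied to $R=\{r_1,r_2\}$ and $B=\{b_1,b_2\}$, then shows that the closed line segments from $r_1$ to $b_1$ and from $r_2$ to $b_2$ do not intersect.

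Since this holds for every pair of distinct edges on the same full-measure event, $\mat$ is almost surely planar. There is no real obstacle here; the only thing to check with care is that the parallel-freeness required by Observation \ref{obs} is supplied by Lemma \ref{parallel} once one notes that the relevant four points always lie in $[\Pi]$.
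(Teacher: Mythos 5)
Your proposal is correct and matches the paper's proof, which likewise cites Observation \ref{obs} together with Lemma \ref{parallel} and notes that minimality is only needed for pairs of edges. Applying Lemma \ref{parallel} to the superposition $\red+\blue$ (a Poisson process of intensity $2$) is exactly the right way to supply the parallel-free hypothesis.
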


\begin{proof}
This is immediate from Observation \ref{obs} and Lemma
\ref{parallel}; in fact we need the minimality property only
for sets of two edges.
\end{proof}

In order to use Observation \ref{obs}, we will often wish to
consider the perfect matching of minimum total length between
two finite sets of points $R,B\subset\R^d$.  If $d\geq 2$ and
the points are any subset of the points of a Poisson process,
one may show that such a minimum matching is a.s.\ unique.
Formally, this fact is not needed, because we can choose among
minimum-length matchings according to some fixed rule, such as
the earliest in lexicographic order with respect to the
coordinates of the points.

\section{Matchings in strips and lines}
\label{strip-line}

In this section we prove Theorem \ref{relax}.  Each part will
be proved in the strip, and the case of $\R^2$ then follows as
an easy consequence (see below).  We also prove Theorem
\ref{minimal}(i) in the case $d=1$.

\begin{proof}[Proof of Theorem \ref{relax}, $\R^2$ case]
Divide $\R^2$ into the disjoint strips
$$\R\times [i,i+1), \quad i\in\Z.$$
Within each strip, take an independent copy of the matching
scheme (together with the associated red and blue points) on
the strip from the appropriate part (i)--(iv) of the theorem
(see the proofs below). This yields a matching scheme $\mat$ in
$\R^2$ which inherits the appropriate properties of the
matching on the strip and is invariant under translations in
$\R\times\Z$ (for (ii)--(iv)).  To achieve full
translation-invariance in $\R^2$ (for (ii)--(iv)), let $U$ be a
uniform random variable in $[0,1)$, independent of
$(\red,\blue,\mat)$, and translate the entire process
$(\red,\blue,\mat)$ by the vector $(0,U)$.
\end{proof}

\paragraph{Remark.}  By the above argument, a positive answer to
Question \ref{stripq} would imply a positive answer to Question
\ref{theq}.\parend

We will make frequent use of the following object. Given
$\red,\blue$ in the strip, define a function $F:\R\to\Z$ by
\begin{equation}\label{rw}
\begin{split}
F(0)&=0;\\
F(y)-F(x)&=(\red-\blue)\big((x,y]\times[0,1)\big),\quad x<y.
\end{split}\end{equation}
Thus $F$ is a right-continuous continuous-time simple symmetric
random walk on the integers, with its up-steps and down-steps
corresponding to red and blue points respectively.  See Figure
\ref{arc}.

\begin{proof}[Proof of Theorem \ref{relax}(i), strip case]
With $F$ as in \eqref{rw}, define
$$Z:=\{z\in\R: F(z)=0 \text{ and }F(z-)\neq 0\};$$
i.e.\ the set of left endpoints of the intervals where $F$ is
zero.  Almost surely, $Z$ is a discrete set, and because $F$ is
a recurrent random walk, $Z$ is unbounded in both the positive
and negative directions.  If $z_1<z_2$ are two consecutive
elements of $Z$, then the rectangle $(z_1,z_2]\times[0,1)$
contains equal numbers of red and blue points.  Therefore,
within each such rectangle, take the matching of minimum total
edge length, and appeal to Observation \ref{obs} and Lemma
\ref{parallel}.
\end{proof}

\paragraph{Remark.}  The above construction cannot be adapted
to give a translation-invariant matching on the strip, because
the random walk $F$ is null-recurrent, therefore $Z$ has zero
density.\parend

\begin{proof}[Proof of Theorem \ref{relax}(ii), strip case]
Let $(r_i)_{i\in\Z}=((x_i,y_i))_{i\in\Z}$ be the points of
$[\red]$, ordered so that their first coordinates are in
increasing order (i.e.\ $x_i<x_{i+1}\;\forall i$), and so that
$r_0$ is the first point to the right of the origin (so
$x_{-1}<0<x_0$). Conditional on $\red$, choose one of the two
matchings
$$\{\ldots,(r_{-2},r_{-1}),(r_0,r_1),(r_2,r_3),\ldots\}; \quad
\{\ldots,(r_{-1},r_{0}),(r_1,r_2),(r_3,r_4),\ldots\}; $$
each with probability $1/2$.
\end{proof}

\begin{proof}[Proof of Theorem \ref{relax}(iii), strip case]
Let $F$ be as in \eqref{rw}, but with $\red'$ in place of
$\red$.  Now $F$ is a biased random walk with positive drift.
Hence the set of {\dof cut-times} given by
$$C:=\Big\{x\in\R: \sup_{t<x} F(t)=F(x-)<F(x)=\inf_{t\geq x} F(t)\Big\}$$
forms a translation-invariant ergodic point process of positive
intensity in $\R$.  If $c_1<c_2$ are two consecutive cut-times,
then the rectangle $(c_1,c_2]\times[0,1)$ contains strictly
more red than blue points.  Therefore, within each such
rectangle, take the matching which minimizes the total edge
length from among all possible partial red-blue matchings of
maximum cardinality (i.e.\ those in which all blue points are
matched).  By Observation \ref{obs} and Lemma \ref{parallel},
the resulting matching scheme has the required properties.
\end{proof}

\begin{figure}
\centering
\resizebox{4.5in}{!}{\includegraphics{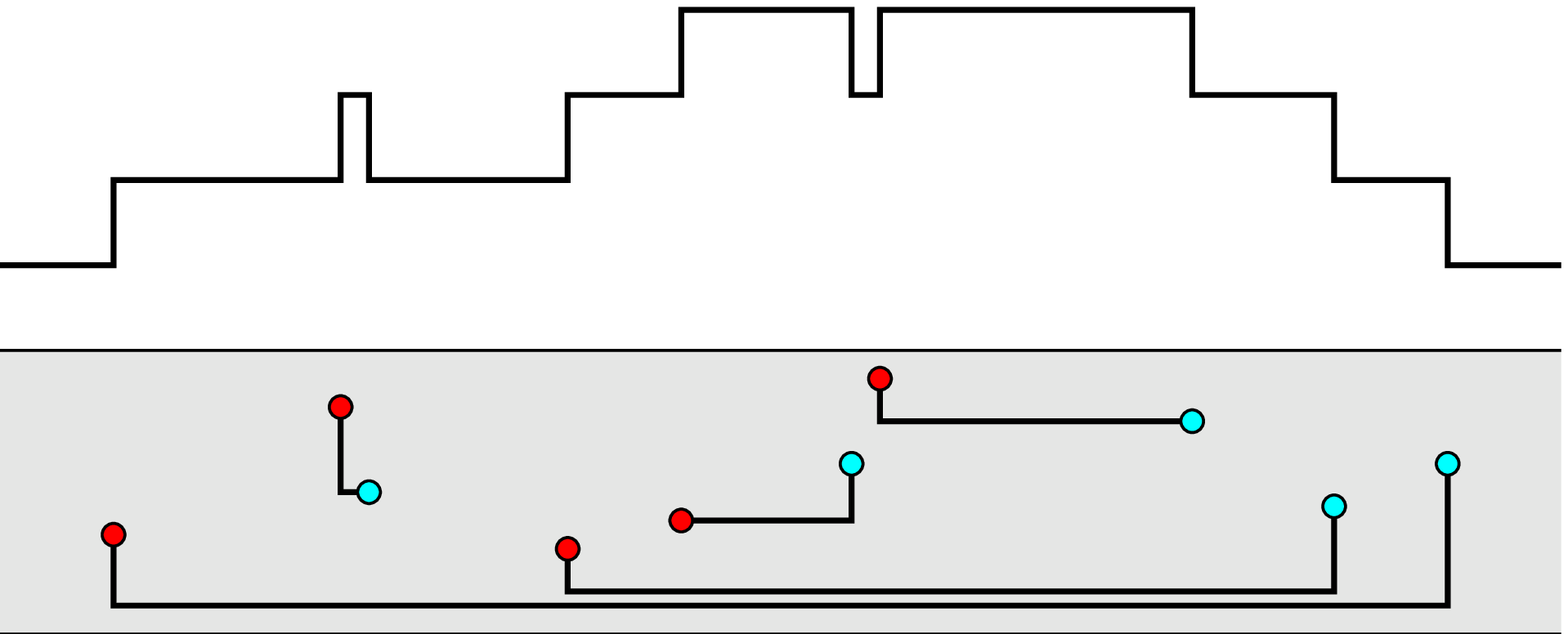}}
\caption{The random walk $F$ associated with red and blue points in the strip,
and the matching with disjoint arcs from the proof of Theorem \ref{relax}(iv).}
\label{arc}
\end{figure}
\begin{proof}[Proof of Theorem \ref{relax}(iv), strip case]
We first construct the matching scheme $\mat$.  Given
$\red,\blue$ on the strip, let $F$ be as in \eqref{rw}. Suppose
$r=(r_1,r_2)\in[\red]$ is a red point, so that
$F(r_1)=F(r_1-)+1$. We match $r$ to the blue point
$b=(b_1,b_2)$, where
$$b_1:=\inf\{t>r_1: F(t)=F(r_1-)\}.$$
Thus, $b$ marks the end of $F$'s upward excursion starting at
$r$, or equivalently $b$ is the first point to the right of $r$
such that the red and blue points in the intervening rectangle
equalize.  It is easy to check that $\mat$ is
indeed a translation-invariant matching scheme.  Furthermore,
if $(r,b),(r',b')$ are two distinct edges of the matching then
the intervals $[r_1,b_1]$ and $[r_1',b_1']$ are either disjoint
or nested one inside the other.

Now we construct the polygonal arcs.  If $r$ and $b$ are two
matched points, we will join them by a polygonal arc with
vertices
$$r=(r_1,r_2),(r_1,H),(b_1,H),(b_1,b_2)=b;$$
we need
only choose $H$ smaller (say) than the heights of all the
intervening arcs. This is achieved by taking for example
$H:=L/D$, where
$$L=\min\Big\{y:(x,y)\in
([\red]\cup[\blue])\cap ([r_1,b_1]\times[0,1))\Big\}$$
is the height of the lowest point between $r$ and $b$
(including $r$ and $b$), and
$$D=\max_{t\in [r_1,b_1]}F(t)-F(r_1-)$$
is the maximum nesting depth between $r$ and $b$.
\end{proof}

\paragraph{Remark.}
The matching $\mat$ constructed in the above proof has an
interpretation as the unique Gale-Shapley stable matching with
preferences based on one-sided horizonal distance, and also as
a version of a matching introduced by Meshalkin in the
construction of finitary isomorphisms.  See
\cite{gale-shapley,h-p-trees,meshalkin} for details.  The same
matching is used our next proof.\parend

\begin{proof}[Proof of Theorem \ref{minimal}(i), case $d=1$]
Given $\red$ and $\blue$ Possion processes on $\R$, define $F$
as in \eqref{rw} except replacing the rectangle
$(x,y]\times[0,1)$ with the interval $(x,y]$, so $F$ is again a
simple symmetric random walk.  Construct a matching scheme
exactly as in the proof of Theorem \ref{relax}(iv) by matching
the red point $r$ to the blue point
$$b:=\inf\{t>r: F(t)=F(r-)\}.$$

We claim that this matching scheme $\mat$ is minimal.  To prove
this, let $(r_1,b_1),\ldots,(r_n,b_n)\in[\mat]$ be any finite
set of edges of the matching -- we must prove that no other
matching of these red and blue points has smaller total length.
Since any two edges of $\mat$ span disjoint or nested
intervals, there exists a bounded interval $(u,v)\subset\R^d$
containing $\{r_i,b_i\}_{i=1,\ldots ,n}$, and such that every
point in $(u,v)$ has its partner in $(u,v)$.  (To prove this,
let $I$ be the smallest interval containing the original
points, then let $(u,v)$ be the smallest interval containing
all the points in $I$ and their partners).  Therefore, it
suffices to prove the above minimality statement under the
assumption that $\{r_i,b_i\}_{i=1,\ldots ,n}$ are the {\em
only} red and blue points in $(u,v)$.

For any perfect matching $m$ of $\{r_i\}_{i=1,\ldots ,n}$ to
$\{b_i\}_{i=1,\ldots ,n}$, and any $t\in(u,v)$, define
$$h_m(t):=\#\big\{i:\; r_i\leq t\leq m(r_i)
\text{ or } m(r_i)\leq t\leq r_i\big\};$$
(i.e.\ the number of edges that cross $t$).  Note that the
total edge-length of the matching $m$ may be expressed thus:
\begin{equation}\label{integral}
\sum_i |r_i-m(r_i)| =\int_u^v h_m(t)\,dt.
\end{equation}

We claim that, writing $M=\{(r_i,b_i)\}_{i=1,\ldots ,n}$ for
the restriction of $\mat$ to these points, we have $h_M(t)\leq
h_m(t)$ for all $t\in(u,v)\setminus\{r_i,b_i\}_{i=1,\ldots,n}$.
Once this is proved, the required minimality follows from
\eqref{integral}.  To prove this inequality, first note that
for all such $t$,
$$h_M(t)=F(t)-F(u).$$
Indeed, this holds for any matching in which every edge has the
red point to the left of the blue point: as $t$ increases from
$u$ to $v$, the quantity $h_M(t)$ increases by 1 at each red
point, and decreases by 1 at each blue point, so it equals the
right side. On the other hand, for any $m$ and all such $t$,
$$h_m(t)\geq F(t)-F(u),$$
because the right side equals the excess of red points minus blue
points in $(u,t]$, so at least this many edges must cross $t$.
\end{proof}

\section{Impossibility results}

In this section we prove Theorem \ref{locfin}(i), Theorem
\ref{minimal}(ii), and Theorem \ref{cond}.

\begin{proof}[Proof of Theorem \ref{locfin}(i)]
The proofs for the strip and the line are nearly identical.  We
first consider the strip.

Let $\mat$ be a translation-invariant matching scheme on the
strip. Assume without loss of generality that the matching is
ergodic under the group of translations of $\R$ (otherwise
consider its ergodic components).  We will prove that a.s.\
infinitely many edges cross the line segment
$\{0\}\times[0,1)$.  Suppose on the contrary that for some
$k<\infty$, exactly $k$ edges cross $\{0\}\times[0,1)$ with
positive probability. Define the random set
\begin{align*}
S:=\big\{x\in\R:\; &\text{$\{x\}\times[0,1)$ does not intersect
$[\red]\cup[\blue]$,} \\
&\text{and is crossed by exactly $k$
edges}\big\}.
\end{align*}
Then the above assumptions imply that $S$ is a
translation-invariant ergodic random set of positive intensity,
say $\lambda$, and thus
\begin{equation}\label{erg}
\lim_{n\to\infty} \frac{\leb (S\cap[0,n))}{n}=\lambda
\quad\text{ a.s.}
\end{equation}

If $s<t$ are any two elements of $S$, then there are at most
$2k$ edges from the rectangle $[s,t)\times[0,1)$ to its
complement, therefore the difference between the numbers of red
and blue points in this rectangle is at most $2k$.  Hence, with
$F$ defined as in \eqref{rw}, $|F(s)-F(t)|\leq 2k$. This
implies that there exists some random integer $H$ such that
a.s.,
$$F(s)\in [H,H+2k] \text{ for all } s\in S.$$
However, since $F$ is a null-recurrent random walk, we have for
every integer $h$,
$$\lim_{n\to\infty} \frac{\leb \{s\in\R: F(s)\in[h,h+2k]\}}{n}=0 \quad\text{ a.s.,}$$
giving a contradiction to \eqref{erg}, and completing the
proof.

The proof in the case of $\R$ is identical except that we
consider edges crossing the site $x$ rather than the line
segment $\{x\}\times[0,1)$.
\end{proof}

\begin{proof}[Proof of Theorem \ref{cond}]
Let $d\geq 2$ and let $\mat$ be a matching in $\R^d$ that is
not locally finite. On the event that some bounded set is
crossed by infinitely many edges, we will derive a
contradiction to minimality. Suppose that $S\subset\R^d$ is a
bounded set that is crossed by infinitely many edges.  Suppose
also that $[\red]\cup[\blue]$ is parallel-free, and locally finite as a subset of $\R^d$ -- both of these properties hold a.s.\ for a
Poisson process (Lemma \ref{parallel}). The remainder of the proof will be a
deterministic geometric argument given these assumptions.

Let $L:=\{\{a u:a\in\R\}:u\in\R^d\setminus\{0\}\}$
be the projective space of all lines passing through the
origin; $L$ is a compact metric space under the angle metric.
The {\dof direction} of an edge $(r,b)$ is the line
$\{a(r-b):a\in\R\}\in L$.  Since $L$ is compact,
the set of directions of all edges that cross $S$ has an
accumulation point; fix $\ell\in L$ to be one such. By the
local finiteness of $[\red]\cup[\blue]$, the set of edges that
intersect $S$ and have both endpoints outside any given bounded
set contains a sequence whose directions converge to
$\ell$.

Now fix some edge $(r,b)\in[\mat]$ whose direction is not equal
to $\ell$ (this is possible, otherwise all edges would be
parallel), and let $\theta$ be the acute angle between $(r,b)$
and $\ell$.  By the above observations, for any $t>0$ there
exists an edge $(r',b')\in[\mat]$ that crosses $S$, makes angle
less than $\theta/2$ with $\ell$, and has both endpoints
$r',b'$ outside $B(t)$.  We claim that if $t$ is sufficiently
large, any such edge satisfies
$$|r-b'|+|r'-b|<|r-b|+|r'-b'|,$$
contradicting minimality.

Figure \ref{rematch} illustrates the proof of this claim: take
any (doubly infinite) line $\Lambda$ intersecting $S$ and
making angle less than $\theta/2$ with $\ell$. If $r'$ and $b'$
are points on $\Lambda$ going to infinity in opposite
directions, such that both are at distance $t$ from $O$, where
$t\to\infty$, then the difference $|r-b'|+|r'-b|-|r'-b'|$
converges to the orthogonal projection of $(r,b)$ onto
$(r',b')$.  The latter equals $\pm |r-b|\cos\alpha$, where
$\alpha$ is the angle between $(r,b)$ and $\Lambda$ (and the
sign depends on the order of $r'$ and $b'$). Furthermore, this
convergence is uniform in the choice of the line $\Lambda$,
because the distance between $\Lambda$ and $(r,b)$ is bounded
for lines that intersect $S$. Since $\alpha>\theta/2$ we have
$\pm |r-b|\cos\alpha \leq |r-b|\cos (\theta/2)<|r-b|$, and the
above claim follows.
\begin{figure}
\centering
\begin{picture}(0,0)(0,0)
\put(20,20){$r'$}
\put(345,20){$b'$}
\put(205,33){$b$}
\put(150,60){$r$}
\end{picture}
\resizebox{5in}{!}{\includegraphics{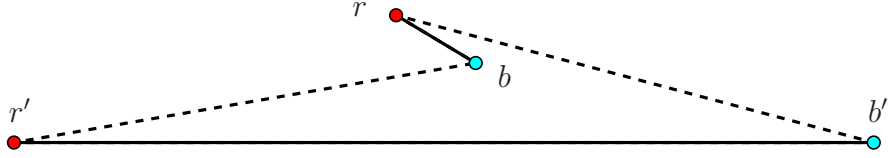}}
\caption{Contradicting minimality: as $r'$ and $b'$ go to infinity along a fixed line,
the difference between the total dashed length and
$|r'-b'|$ converges to the projection of $(r,b)$ on $(r',b')$.}
\label{rematch}
\end{figure}
\end{proof}

\begin{proof}[Proof of Theorem \ref{minimal}(ii)]
The proof of Theorem \ref{cond} above applies unchanged on the
strip (the direction $\ell$ must of course be horizonal), and
shows that any minimal matching scheme must be locally finite.
On the other hand, Theorem \ref{locfin}(i) states that a
locally finite translation-invariant matching scheme cannot
exist on the strip.
\end{proof}

\section{Locally finite matching}

In this section we prove Theorem \ref{locfin}(ii).

\begin{lemma}
\label{poi} Let $X,X',Y$ be independent Poisson random
variables with respective means $\lambda,\lambda,\mu$, where
$\mu\leq \lambda$.  Then
$$\P\big(X-X'\geq Y\big)\leq  \exp -\frac{\mu^2}{6\lambda}.$$
\end{lemma}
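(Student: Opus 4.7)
The plan is to apply the exponential Markov inequality. For any $s>0$, by independence and the Poisson moment generating function formula $\E[e^{sZ}]=e^{\nu(e^s-1)}$ for $Z\sim\Poi(\nu)$,
$$\P(X-X'\geq Y)=\P(X-X'-Y\geq 0)\leq\E\bigl[e^{s(X-X'-Y)}\bigr]=\exp g(s),$$
where
$$g(s):=\lambda(e^s-1)+(\lambda+\mu)(e^{-s}-1).$$
The target exponent $-\mu^2/(6\lambda)$ is exactly what one obtains by minimizing the natural quadratic upper bound for $g$, so the plan is to establish
$$g(s)\leq\tfrac{3\lambda}{2}s^2-\mu s\quad\text{for }s\in[0,1/3]. \qquad(\ast)$$
Once $(\ast)$ is in hand, substituting the minimizer $s:=\mu/(3\lambda)$, which lies in $[0,1/3]$ precisely because of the hypothesis $\mu\leq\lambda$, gives
$$g(s)\leq\tfrac{3\lambda}{2}\cdot\tfrac{\mu^2}{9\lambda^2}-\tfrac{\mu^2}{3\lambda}=-\tfrac{\mu^2}{6\lambda},$$
and the lemma follows.

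To prove $(\ast)$, I will first reduce to the case $\mu=\lambda$. The partial derivative of $\tfrac{3\lambda}{2}s^2-\mu s-g(s)$ in $\mu$ equals $1-s-e^{-s}$, which vanishes at $s=0$ and has derivative $e^{-s}-1\leq 0$ for $s\geq 0$; thus it is non-positive on $[0,\infty)$, so the left-hand side of $(\ast)$ minus the right is non-increasing in $\mu$, and it suffices to check $(\ast)$ at $\mu=\lambda$. There $(\ast)$ becomes
$$\phi(s):=e^s+2e^{-s}-3+s-\tfrac{3}{2}s^2\leq 0\quad\text{on }[0,1/3].$$
A direct computation shows $\phi(0)=\phi'(0)=\phi''(0)=0$ and $\phi'''(s)=e^s-2e^{-s}$, which is negative on $[0,\tfrac{1}{2}\log 2)$. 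Since $1/3<\tfrac{1}{2}\log 2$, integrating $\phi'''\leq 0$ three times from $0$ gives $\phi''\leq 0$, then $\phi'\leq 0$, then $\phi\leq 0$ on $[0,1/3]$.

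There is no serious probabilistic obstacle once the Chernoff route is chosen; the only real work is this elementary derivative analysis. The delicate point to watch is that the natural range of validity of the quadratic upper bound $(\ast)$ must include the optimizer $s=\mu/(3\lambda)$, and this is exactly where the assumption $\mu\leq\lambda$ gets used.
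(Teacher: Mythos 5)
Your proof is correct and is essentially the paper's argument: both apply a Chernoff/exponential-Markov bound to $X-X'-Y$ (the paper sets $Z:=X'+Y\sim\Poi(\lambda+\mu)$, evaluates $\E s^{X-Z}$ at the exact optimizer $s=\sqrt{(\lambda+\mu)/\lambda}$, and then uses $\sqrt{1+\delta}\leq 1+\delta/2-\delta^2/12$ for $\delta\in[0,1]$, whereas you take the suboptimal tilt $e^{s}$ with $s=\mu/(3\lambda)$ and bound the exponent by a quadratic first --- the same computation in a different order). One slip of the pen: since $\partial_\mu\bigl[\tfrac{3\lambda}{2}s^2-\mu s-g(s)\bigr]=1-s-e^{-s}\leq 0$, it is the \emph{right}-hand side of $(\ast)$ minus the left that is non-increasing in $\mu$ (equivalently, the left minus the right is non-\emph{de}creasing), which is precisely what makes $\mu=\lambda$ the worst case and justifies checking $(\ast)$ there.
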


\begin{proof}
Write $Z:=Y+X'$ and $\nu:=\lambda+\mu$, so that $Z$ is Poisson
with mean $\nu$ and independent of $X$, and $X-X'\geq Y$ is
equivalent to $X\geq Z$.   Now we apply a Chernoff bound:
taking $s=\sqrt{\nu/\lambda}$ we have
\begin{align*}
\P(X\geq Z)&=\P(s^{X-Z}\geq 1)
\leq \E s^{X-Z} \\
&=\exp \big[\lambda (s-1)+\nu(s^{-1} -1)\big]
=\exp \big[2\sqrt{\lambda\nu}-\lambda-\nu\big].
\end{align*}
Writing $\delta=\mu/\lambda$, the last expression equals
\begin{align*}
&\exp 2\lambda \big[\sqrt{1+\delta}-1-\delta/2\big]\\
{ }\leq&\exp 2\lambda \Big[-\frac{\delta^2}{12}\Big]
= \exp -\frac{\mu^2}{6\lambda},
\end{align*}
(where we used the fact that $\sqrt{1+\delta}\leq
1+\delta/2-\delta^2/12$ for $\delta\in[0,1]$).
\end{proof}

\begin{proof}[Proof of Theorem \ref{locfin}(ii)]
We first argue that it is suffices to construct a locally
finite matching scheme in the case $d=2$.  Starting from such a
scheme, we may obtain a matching scheme in the slab
$\R^2\times[0,1)^{d-2}$ (where $d\geq 3$) by assigning each red
or blue point $x\in\R^2$ a location $(x,U)$, where the $U$ are
independent and uniformly random in $[0,1)^{d-2}$.  Now take
independent copies of this matching in each of the slabs
$\R^2\times(z+[0,1)^{d-2})$, for $z\in\Z^{d-2}$; the resulting
matching in $\R^d$ clearly inherits the locally finite
property, and is invariant under all translations in $\R^2
\times\Z^{d-2}$.  To obtain a fully translation-invariant
version, translate by a uniformly random element of
$\{0\}^2\times [0,1)^{d-2}$.

Similarly, it now suffices to find a matching scheme in $\R^2$
that is invariant under translations of $\Z^2$; then we obtain
a fully translation-invariant version by applying a translation
by a uniformly random element of $[0,1)^2$.

We start by defining a random sequence of successively coarser
partitions of $\R^2$ into rectangles.  Let $a_n=n!$.  For each
$n=1,2\ldots$, an {\dof $n$-block} will be an
$a_n$-by-$a_{n-1}$ (respectively $a_{n-1}$-by-$a_{n}$)
rectangle if $n$ is even (respectively odd).  Each $n$-block
$A$ will be a disjoint union of $a_n/a_{n-2}$ $(n-1)$-blocks,
called the {\dof children} of $A$.  The left-most (respectively
bottom-most) child is called the {\dof heir} of $A$.  See
Figure \ref{lead}.  We choose the blocks in a $\Z^2$-invariant
way as follows.  The $1$-blocks are all the squares $z+[0,1)^2$
for $z\in\Z^2$.  Let $r_n$ be a uniformly random integer in
$[0,a_n/a_{n-2})$, where the $r_n$ are independent of each
other and of $\red,\blue$. Let $t_n=r_n a_{n-2}+r_{n-2}
a_{n-4}+\ldots$, where the last term in this sum is $r_2a_0$
(respectively $r_3a_1$),  and define an $n$-block to be any
rectangle of the form $[x a_n+t_n, (x+1)a_n+t_n)\times[y
a_{n-1}+t_{n-1},(y+1)a_{n-1}+t_{n-1})$ where $(x,y)\in\Z^2$ if
$n$ is even (respectively, the same with the coordinates
reversed if $n$ is odd).
\begin{figure}
\centering
\begin{picture}(0,0)(0,0)
\put(35,20){$C$}
\put(35,60){$B$}
\put(30,-5){$a_{n-2}$}
\put(-17,20){$a_{n-3}$}
\put(145,128){$a_{n}$}
\put(290,60){$a_{n-1}$}
\end{picture}
\resizebox{4in}{!}{\includegraphics{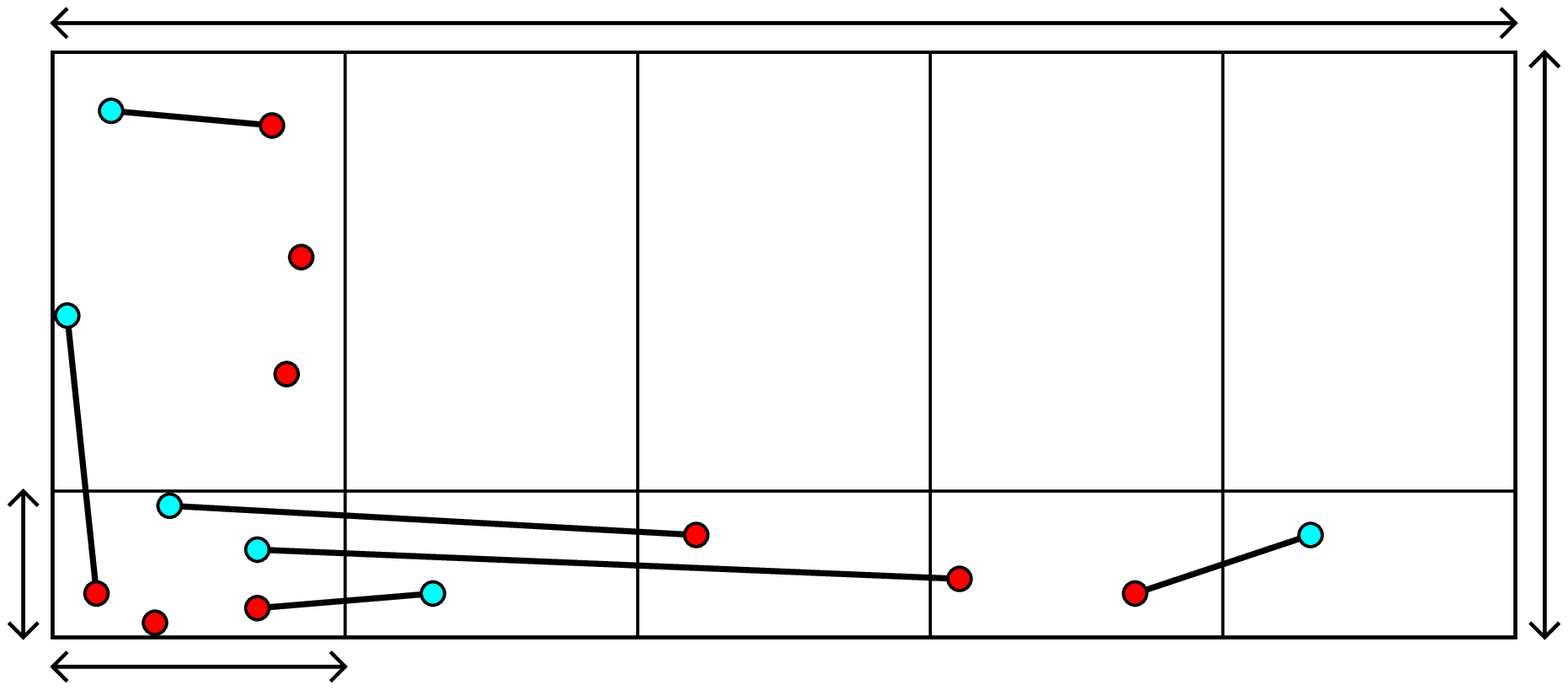}}
\caption{An $n$-block $A$,
its children including its heir $B$, and their heirs including $B$'s heir $C$.
Also shown are the new edges added in stage $n$ assuming $A$ is neither bad nor dodgy.}
\label{lead}
\end{figure}

We now construct a matching via a sequence of stages
$n=1,2,\ldots$.  At the end of stage $n$ we will have a partial
matching with each of its edges confined within some $n$-block.

\pagebreak
\paragraph{Stage $1$.}
Within each $1$-block, match as many red-blue pairs as possible
(for definiteness, choose from among the partial matchings of
maximal cardinality of the points in the block the one with
minimum total length).

\paragraph{Stage $n$ ($n\geq 2$).}
For each $n$-block $A$, let $B$ be the heir of $A$, and let
$C$ be the heir of $B$ (or let $C=B$ if $n=2$). Now:
\begin{mylist}
\item unmatch all points in $B$;
\item if possible, match all currently non-matched points in
    $A\setminus B$ to non-matched points in $(A\setminus B)\cup C$;
\item match as many of the remaining non-matched points in $A$
    as possible.
\end{mylist}
(In (ii) and (iii), for definiteness take the matching of
minimum length among those with the required property.  The
unmatching step (i) is a matter of convenience only - an
alternative would be to match only in blocks that lie in no
higher-order heir; see the finiteness claims below.)  We call
the block $A$ {\dof bad} if step (ii) does not succeed.  We
call a block {\dof dodgy} if at least one of its children is
bad. \parend\bigskip

Note that at the end of stage $n$, in any $n$-block $A$, the
number of non-matched points equals the excess
$|\red(A)-\blue(A)|$.  Moreover, if $n\geq 2$ and $A$ is not
bad, then all the non-matched points lie in the heir of $A$.
Furthermore (and this is the key observation), if $n\geq 3$ and
$A$ is neither bad nor dodgy then all the new edges added at
stage $n$ are confined to heirs (the heir $B$ of $A$ and the
heirs of the children of $A$).  See Figure \ref{lead}.

Let $Q:=[0,1)^2$ be the unit square; $Q$ is contained in
exactly one $n$-block for each $n$.  We will prove below that
a.s.\ $Q$ lies in only finitely many heirs, bad blocks and
dodgy blocks.  Once this is proved, the same also holds for
every integer unit square, and we deduce the following.  Each
point becomes unmatched (in step (i)) only finitely many times,
so we can define a limiting partial matching ${\cal M}$.  This
is in fact a perfect matching, since the only non-matched
points in a non-bad block lie in its heir.  Furthermore, from
the key observation above we deduce that $Q$ intersects only
finitely many edges a.s., so the same holds for any bounded set
as required.

Finally we turn to the proofs of the finiteness claims above.
Since the partitions into blocks are independent of
$\red,\blue$ we have
$$\P(\text{the $n$-block containing $Q$ is an heir})
=\frac{a_n a_{n-1}}{a_{n+1}a_n}=\frac{1}{n(n+1)}.$$
Since $\sum_n \frac{1}{n(n+1)}<\infty$, the Borel-Cantelli
lemma implies that $Q$ lies in only finitely many heirs a.s.

Also, the block $A$ is bad only if the net excess of one color
of points over the other in $A\setminus B$ cannot be
accommodated in $C$.  Thus, using Lemma \ref{poi},
\begin{align*}
\lefteqn{\P(Q\text{ lies in a bad $n$-block})} \\
{ }&\leq \P\big[(\red-\blue)(A\setminus B)> \blue(C)\big]+
\P\big[(\blue-\red)(A\setminus B)> \red(C)\big] \\
&\leq 2\exp -\frac{(a_{n-2} a_{n-3})^2}{6(a_n a_{n-1} - a_{n-1} a_{n-2})}
\leq 2\exp -\frac{n!^2}{6 n^9} \leq c_1e^{ -c_2 n},
\end{align*}
for some constants $c_i\in(0,\infty)$. Therefore
\begin{align*}
\P(Q\text{ lies in a dodgy $n$-block})\leq
\frac{a_n}{a_{n-2}} \,c_1 e^{ -c_2 (n-1)} \leq c_3 e^{-c_4 n}.
\end{align*}
Hence by the Borel-Cantelli lemma again, a.s.\ $Q$ lies
in only finitely many bad blocks and dodgy blocks.
\end{proof}

\section{The minimum matching}

In this section we prove Theorem \ref{minimal}(i) in the case
$d\geq 3$.  The approach was suggested by Yuval Peres.

For a translation-invariant matching scheme $\mat$ of processes
$\red$ and $\blue$ both of intensity 1, define the average edge
length
$$\eta(\mat):=\frac 1{\leb S} \;\E \int_S |x-\mat(x)| \,d \red(x),$$
where $S\subset\R^d$ is any set with $\leb S\in(0,\infty)$.
The translation-invariance implies that $\eta(\mat)$ is
independent of the choice of $S$. (The quantity $\eta(\mat)$
also equals $\E |\mat^*(0)|$, where $\mat^*$ is the Palm
process obtained by conditioning on the presence of a red point
at the origin -- see e.g.\ \cite{hpps} or \cite[Ch.
11]{kallenberg}).

The key ingredient is the following fact, proved in
\cite[Theorem 1]{hpps}.
\begin{thm}[Mean edge length; \cite{hpps}]\label{finite-mean}
Let $\red$ and $\blue$ be independent Poisson processes of
intensity 1 in $\R^d$.  There exists a translation-invariant
matching scheme $\mat$ satisfying $\eta(\mat)<\infty$ if and
only if $d\geq 3$.
\end{thm}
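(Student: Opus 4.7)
The plan is to realize a translation-invariant minimal matching as a subsequential weak limit of finite minimum-length matchings in growing boxes, with Theorem~\ref{finite-mean} as the crucial input needed to prevent partners from escaping to infinity. For each $n\geq 1$, let $Q_n:=[-n/2,n/2]^d$ and let $M_n$ be the perfect matching between $[\red]\cap Q_n$ and $[\blue]\cap Q_n$ of minimum total edge length, defined on the full-probability event that these two sets are finite; when their cardinalities differ I instead take a minimum-length partial matching of maximum cardinality, a modification with no effect in the limit. The length-minimizing property is automatically inherited by subsets: any finite sub-collection of edges of $M_n$ is itself a minimum matching of its own endpoints, for otherwise a local swap would reduce $M_n$'s total length. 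Hence $M_n$ already satisfies the defining condition of Theorem~\ref{minimal} on every finite set of its edges.

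Let $U_n$ be uniform on $Q_n$, independent of $(\red,\blue)$, and set
$$\Phi_n:=(\red-U_n,\blue-U_n,M_n-U_n).$$
The next step is to extract a subsequential weak limit $\Phi^*=(\red^*,\blue^*,\mat^*)$ in the vague topology on locally finite configurations on $\R^d\cup(\R^d)^2$. Translation-invariance of $\Phi^*$ is routine: since $\leb(Q_n\triangle(Q_n-v))/\leb(Q_n)\to 0$ for every fixed $v\in\R^d$, the laws of $\Phi_n$ and of its $v$-translate are asymptotically equal in total variation. The marginals $\red^*,\blue^*$ are independent Poisson processes of intensity $1$. Minimality of $\mat^*$ is inherited from $M_n$ via the Portmanteau theorem: for every bounded region $A$ and $k\geq 1$, the event that the first $k$ edges meeting $A$ have minimum total length among permutations of their endpoints is a closed condition (in general position, which holds almost surely) and holds identically for every $M_n$.

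The main obstacle---and the only point where $d\geq 3$ is actually used---is to verify that $\mat^*$ is a.s.\ a genuine perfect matching rather than a partial matching in which some points have ``sent their partner to infinity.'' Here I invoke Theorem~\ref{finite-mean}: let $\mat_0$ be a translation-invariant matching coupled with $(\red,\blue)$ satisfying $\eta(\mat_0)<\infty$, available precisely because $d\geq 3$. Construct an alternative matching of $[\red]\cap Q_n$ to $[\blue]\cap Q_n$ by keeping the $\mat_0$-edges with both endpoints in $Q_n$ and efficiently rematching the leftover ``bad'' points whose $\mat_0$-partner lies outside $Q_n$. A Palm calculation using $\eta(\mat_0)<\infty$ shows that the expected number of bad red points is $O(n^{d-1})$ and that they concentrate in a shell of $O(1)$ thickness along $\partial Q_n$, so a greedy nearest-neighbour rematching within that shell has expected total length $O(n^{d-1})=o(n^d)$. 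Combined with the bulk estimate
$$\E\sum_{r\in[\red]\cap Q_n,\,\mat_0(r)\in Q_n}|r-\mat_0(r)|\leq\eta(\mat_0)\,n^d,$$
this yields $\E(\text{total length of }M_n)=O(n^d)$. Dividing by $|[\red]\cap Q_n|\asymp n^d$, the expected length of the edge of $M_n$ at a uniformly chosen red point in $Q_n$ is $O(1)$; the random shift $U_n$ then converts this spatial average into an $L^1$-bound on the length of the edge of $M_n-U_n$ incident to the red point nearest the origin. Tightness follows, so $\mat^*(0)$ is a.s.\ at finite distance and $\mat^*$ is a genuine matching.

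The hardest technical piece is the explicit boundary rematching: showing that the $O(n^{d-1})$ bad points can be rematched at total cost $o(n^d)$ using only $\eta(\mat_0)<\infty$, with no structural assumption on $\mat_0$ beyond finite mean edge length. All other steps---weak compactness, translation-invariance of the limit, Portmanteau inheritance of minimality, and the conversion of the $L^1$-bound into tightness of $\mat^*(0)$---are soft and work verbatim in any dimension.
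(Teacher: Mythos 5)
Your proposal does not prove the stated theorem. Theorem \ref{finite-mean} asserts that a translation-invariant matching scheme with $\eta(\mat)<\infty$ exists if and only if $d\geq 3$; what you construct instead is a translation-invariant \emph{minimal} matching scheme in $d\geq 3$ (essentially Theorem \ref{minimal}(i)), and you do so by explicitly invoking Theorem \ref{finite-mean} as a black box (``let $\mat_0$ be a translation-invariant matching \ldots\ satisfying $\eta(\mat_0)<\infty$, available precisely because $d\geq 3$''). As a proof of Theorem \ref{finite-mean} this is circular: the one point at which the conclusion of that theorem would have to be established is exactly the point at which you assume it. Moreover the theorem is a biconditional, and neither direction appears in your write-up --- neither the construction of a finite-mean invariant scheme in $d\geq 3$ (which in \cite{hpps} is a genuinely quantitative argument) nor the impossibility statement for $d=1,2$, which requires a lower bound showing that \emph{every} translation-invariant scheme in low dimensions has infinite mean edge length.

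For context: the paper does not reprove Theorem \ref{finite-mean} either; it is imported verbatim from \cite{hpps}. The argument you sketch is instead a recognizable cousin of the paper's proof of Theorem \ref{minimal}(i) for $d\geq 3$, where the paper takes a minimizing sequence for $\eta$ over translation-invariant schemes, extracts a vague subsequential limit, and uses the uniform bound $\eta(\mat_n)\leq C$ to prevent partners escaping to infinity (Corollary \ref{min-mean}), before showing the $\eta$-minimizer is minimal by a rematch-within-cubes argument. Your box-limit variant would in addition need the boundary-rematching estimate you yourself flag as the hardest step, and a more careful justification that minimality passes to the weak limit (``the first $k$ edges meeting $A$'' is not a vaguely continuous functional, since such edges may have arbitrarily distant endpoints --- precisely the tightness issue you are trying to resolve). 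But none of this bears on the statement you were actually asked to prove.
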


\begin{cor}[Minimum matching]\label{min-mean}
Let $d\geq 3$ and let $\red$ and $\blue$ be independent Poisson
processes of intensity 1 in $\R^d$.  There exists a
translation-invariant matching scheme $\mhat$ such that
$$\eta(\mhat) =\min_{\mat} \eta(\mat),$$
\sloppy where the minimum is over all possible
translation-invariant matching schemes of $\red$ to $\blue$ (on
arbitrary probability spaces).
\end{cor}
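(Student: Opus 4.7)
The plan is to realize $\mhat$ as a subsequential weak limit of a minimizing sequence of translation-invariant matching schemes, using the finite-mean hypothesis of Theorem \ref{finite-mean} to prevent mass from escaping to infinity.

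By Theorem \ref{finite-mean}, $\eta^* := \inf_\mat \eta(\mat)$ is finite, the infimum being over translation-invariant matching schemes of $\red$ to $\blue$. Choose $\mat_n$ with $\eta(\mat_n) \to \eta^*$; we may assume $\eta(\mat_n) \leq \eta^*+1$ for all $n$. I would regard each $(\red,\blue,\mat_n)$ as a simple point process on $\R^d \sqcup \R^d \sqcup (\R^d)^2$ equipped with the vague topology. The marginals $\red,\blue$ are fixed Poisson processes, and any bounded subset of $(\R^d)^2$ lies inside some $K\times K$ where the number of $\mat_n$-points is bounded above by $\red(K)$. Moreover, Markov's inequality gives
\[\E\,\#\{(r,b)\in[\mat_n]:\,r\in K,\ |r-b|>L\} \leq \frac{\eta(\mat_n)\,\leb K}{L} \longrightarrow 0\]
uniformly in $n$ as $L\to\infty$, so no edge mass escapes to infinity in expectation. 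Prokhorov's theorem then furnishes a subsequence along which $(\red,\blue,\mat_n)$ converges weakly to some limit $(\red,\blue,\mhat)$.

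The main obstacle is to verify that $\mhat$ is a.s.\ a perfect matching. Each point of $[\red]\cup[\blue]$ has $\mhat$-degree at most one because this condition is closed under vague convergence (using that $\red,\blue$ are a.s.\ simple, so distinct $\mat_n$-edges cannot collapse onto a common endpoint in the limit); similarly an edge of $\mhat$ a.s.\ has one endpoint in $[\red]$ and one in $[\blue]$. To rule out degree zero, pass to the Palm version rooted at a red point at the origin: the partner distance $|\mat_n^*(0)|$ has expectation $\eta(\mat_n)\leq\eta^*+1$, so its law is tight in $n$, and in the weak limit the Palm red point has a partner at finite distance almost surely. Equivalently, the displayed Markov estimate shows that the expected number of red points in any bounded set with partner outside $B(L)$ tends to zero uniformly in $n$, so $\mhat$ is a.s.\ perfect. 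This step is the crux, and it is exactly here that the hypothesis $d\geq 3$ enters through Theorem \ref{finite-mean}.

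Translation-invariance of each $\mat_n$ is preserved by weak limits, hence $\mhat$ is translation-invariant. For optimality, $\eta$ is lower semi-continuous under weak convergence of $(\red,\blue,\mat)$: Fatou applied to the bounded continuous truncations $|x-\mat(x)|\wedge L$, followed by monotone convergence as $L\to\infty$, yields
\[\eta(\mhat) \leq \liminf_n \eta(\mat_n) = \eta^*.\]
Since $\eta^*$ is the infimum over all translation-invariant matching schemes of $\red$ to $\blue$, equality holds and $\mhat$ attains the minimum.
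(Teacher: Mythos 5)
Your proposal is correct and follows essentially the same route as the paper: a minimizing sequence, tightness and a subsequential vague/weak limit, the uniform Markov bound derived from $\eta(\mat_n)\leq C$ to prevent partners escaping to infinity (which the paper also identifies as the key point), and lower semicontinuity of $\eta$ via truncation. The only cosmetic difference is that you carry the triple $(\red,\blue,\mat_n)$ along in the limit, which slightly streamlines the paper's step of re-verifying that the limiting marginals are independent intensity-1 Poisson processes recovered from $\mhat$ alone.
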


Corollary \ref{min-mean} follows from Theorem \ref{finite-mean}
by an abstract argument, which we postpone to the end of the
section.

\begin{proof}[Proof of Theorem \ref{minimal}(i), case $d\geq 3$]
We claim that the matching scheme $\mhat$ from Corollary
\ref{min-mean} is minimal.  Suppose it is not.  Call a finite
set of edges {\dof reducible} if the incident red and blue
points can be rematched to give a strictly lower total length;
so with positive probability there exists a reducible set of
edges.  Therefore for some fixed $t$, with positive probability
there is some reducible set lying entirely in the cube
$[-t/2,t/2)^d$.

Construct a modified matching scheme $\mat'$ from $\mhat$ as
follows.  Within each of the disjoint cubes
$([0,t)^d+tz)_{z\in\Z^d}$, unmatch all the edges that lie
entirely within the cube, and replace them with the matching of
minimum length for this finite set of red and blue points.  The
resulting matching scheme satisfies the strict inequality
\begin{equation}\label{cube}
\frac 1{t^d} \;\E \int_{[0,t)^d} |x-\mat'(x)| \,d \red(x) < \eta(\mhat)
\end{equation}
(since the edges between the cube and its complement are
unaffected by the modification, while the total length of those
within it is never increased and sometimes decreased).  This
matching scheme $\mat'$ is not translation-invariant, but we
obtain a translation-invariant version $\mat''$ by translating
it by an independent uniform element of $[0,t)^d$.  The left
side of \eqref{cube} is unchanged if we replace $\mat'$ with
$\mat''$, so we obtain $\eta(\mat'')<\eta(\mhat)$,
contradicting Corollary \ref{min-mean}.
\end{proof}

\paragraph{Remarks.}
The same argument may be applied for instance to prove the
existence of a minimal one-color matching scheme for a Poisson
process (using \cite[Theorem 4]{hpps}).  Of course, the
approach cannot work for two-color matching in $\R^2$, since
there is no matching scheme with $\eta(\mat)<\infty$ (Theorem
\ref{finite-mean}).
\parend

\begin{proof}[Proof of Corollary \ref{min-mean}]
Recall that a matching scheme is a simple point process $\mat$
in $(\R^d)^2$, where the presence of an ordered pair $(r,b)\in
[\mat]$ signifies matched points $r\in[\red]$ and
$b\in[\blue]$. Note that we can recover the red and blue
processes from $\mat$ as
$\red_\mat(\cdot)=\red(\cdot):=\mat(\cdot\times\R^d)$, and
similarly for $\blue_\mat=\blue$.  In particular $\eta(\mat)$
is a function only of the law of $\mat$.

Let $I:=\inf_\mat \eta(\mat)$, where the infimum is over all
translation-invariant matching schemes of two independent
Poisson processes of intensity 1, and let
$\mat_1,\mat_2,\ldots$ be a sequence of such schemes such that
$$\eta(\mat_n)\searrow I \quad\text{as }n\to\infty.$$
We claim that the sequence $(\mat_n)$ is relatively compact in
distribution with respect to the vague topology on simple point
measures in $(\R^d)^2$.  This follows from \cite[Lemma
16.15]{kallenberg}, since any bounded set $A\subset (\R^d)^2$
is a subset of some $S\times \R^d$, where $S$ is Borel and
bounded, and $\mat_n(S\times \R^d)\stackrel{d}{=}\Poi(\leb S)$
for each $n$, thus $(\mat_n(S\times \R^d))$ is a tight
sequence. Therefore, by passing to a subsequence, we may assume
that for some simple point process $\mhat$ on $(\R^d)^2$,
$$\mat_n \stackrel{d}{\to} \mhat \quad\text{as }n\to\infty$$
in the aforementioned topology.  Since $I<\infty$, we may also
assume that
\begin{equation}\label{bound}
\eta(\mat_n)\leq C \quad\text{for all }n
\end{equation}
for some $C<\infty$. Clearly $\mhat$ is a matching scheme
between the point processes $\red=\red_\mhat$ and
$\blue=\blue_\mhat$. We will prove that it has all the required
properties.  The details will be largely routine, with the crucial
step being the use of the uniform bound \eqref{bound} to preclude
points being `matched to infinity' in the limit.

The above convergence implies that for any continuous,
compactly supported $f:(\R^d)^2\to[0,\infty)$ we have $\int f
d\mat_n\stackrel{d}{\to}\int f d\mhat$ \cite[Lemma 16.16(i)]{kallenberg}. We
first check that $\mhat$ inherits the translation-invariance of
$\mat_n$ -- this holds because for any such $f$ and its image
$f'$ under the diagonal action of some translation of $\R^d$ we
have $\int f d\mhat=\int f' d\mhat$. Next note that if
$D\subset\R^d$ is any closed, bounded, $\leb$-null set then
$\mhat(D\times\R^d)=0$ a.s., because we may choose
$\ind_{D\times\R^d} \leq f\leq\ind_{S\times\R^d}$ with $\leb S$
arbitrarily small, thus $\mhat(D\times\R^d)$ is stochastically
dominated by a $\Poi(\leb S)$ random variable. Therefore by
\cite[Lemma 16.16(iii)]{kallenberg} we have $\mat_n(S_1\times
S_2)\stackrel{d}{\to}\mhat(S_1\times S_2)$ for any bounded
Borel $S_1,S_2\subset\R^d$ with $\leb$-null boundaries.

Next we show that $\red=\red_\mhat$ is a Poisson process of
intensity 1.  It is enough to show that
$\mhat(S\times\R^d)\stackrel{d}{=}\Poi(\leb S)$ for any bounded
Borel $S$ with null boundary, but the problem is that
$S\times\R^d$ is not bounded. Suppose $S\subset B(t)$ and take
$T>t$.  We will approximate using $S\times B(T)$. We have for
any $T$,
$$\mat_n(S\times B(T))\stackrel{d}{\to} \mhat(S\times B(T))
 \quad\text{as }n\to\infty,$$
and also
$$\mhat(S\times B(T)) \stackrel{a.s.}{\to} \mhat(S\times \R^d) \quad\text{as
}T\to\infty.$$
We bound the approximation errors using Markov's inequality and
\eqref{bound}:
\begin{align*}
\P\Big[ \mat_n(S\times B(T)) \neq \mat_n(S\times \R^d &)\Big]
=\P\Big[\mat_n(S\times B(T)^c)> 0\Big] \\
&\leq \P\Big[\int_S|x-\mat_n(x)|\,d\red_{\mat_n}(x)\geq T-t\Big]\\
&\leq \frac{\eta(\mat_n) \,\leb S}{T-t} \leq \frac{C\, \leb S}{T-t},
\end{align*}
so this probability converges to $0$ as $T\to\infty$, uniformly
in $n$. (This uniformity is the key point of the proof).  By
\cite[Theorem 4.28]{kallenberg} it follows that
$\mat_n(S\times\R^d)\stackrel{d}{\to} \mhat(S\times\R^d)$, so
the latter has distribution $\Poi(\leb S)$ as required.

The same argument shows also that $\blue=\blue_\mhat$ is a
Poisson process of intensity 1 (here we use the fact that
$\eta(\mat_n)$ is equal to the analogous quantity with the
roles of red and blue reversed -- see e.g. \cite[Proposition
7]{hpps}).  We can prove that $\red$ and $\blue$ are
independent by applying similar reasoning to the joint law of
$\red(S_1)$ and $\blue(S_2)$ for bounded $S_1,S_2$.

We have established that $\mhat$ is a translation-invariant
matching scheme of two independent intensity-1 Poisson
processes, and it follows that $\eta(\mhat)\geq I$.  On the
other hand for any $\mat$ we have $\eta(\mat)=\sup_{k\to\infty}
\eta_k(\mat)$, where
$$\eta_k(\mat):=\E\int_{[0,1)^d} k \wedge |x-\mat(x)|  \;d\red_\mat(x),$$
and also $\eta_k(\mat_n)\to\eta_k(\mhat)$ for each $k$, so
$\eta(\mhat)\leq I$.  Thus $\eta(\mhat)= I$ as required.
\end{proof}

\section*{Acknowledgements}
I thank Yuval Peres, Oded Schramm and Terry Soo for many
valuable conversations.

\bibliography{bib}

\begin{thebibliography}{1}

\bibitem{deijfen}
M.~Deijfen.
\newblock Stationary random graphs with prescribed iid degrees on a spatial
  {P}oisson process.
\newblock {\em Electron. Commun. Probab.}, 14:81--89, 2009.

\bibitem{gale-shapley}
D.~Gale and L.~Shapley.
\newblock College admissions and stability of marriage.
\newblock {\em Amer. Math. Monthly}, 69(1):9--15, 1962.

\bibitem{hpps}
A.~E. Holroyd, R.~Pemantle, Y.~Peres, and O.~Schramm.
\newblock Poisson matching.
\newblock {\em Ann. Inst. Henri Poincar\'e Probab. Stat.}, 45(1):266--287,
  2009.

\bibitem{h-p-trees}
A.~E. Holroyd and Y.~Peres.
\newblock Trees and matchings from point processes.
\newblock {\em Electron. Comm. Probab.}, 8:17--27 (electronic), 2003.

\bibitem{kallenberg}
O.~Kallenberg.
\newblock {\em Foundations of modern probability}.
\newblock Probability and its Applications (New York). Springer-Verlag, New
  York, second edition, 2002.

\bibitem{krikun}
M.~Krikun.
\newblock Connected allocation to {P}oisson points in {$R^2$}.
\newblock {\em Electron. Comm. Probab.}, 12:140--145 (electronic), 2007.

\bibitem{meshalkin}
L.~D. Meshalkin.
\newblock A case of isomorphism of {B}ernoulli schemes.
\newblock {\em Soviet Math. Dokl.}, 3, 1962.

\bibitem{soo}
T.~Soo.
\newblock Translation-invariant matchings of coin-flips on ${Z}^d$.
\newblock {\em Adv. in Appl. Probab.}, arXiv:math/0610334.
\newblock To appear.

\bibitem{timar}
A.~Timar.
\newblock Invariant matchings of exponential tail on coin flips in ${Z}^d$.
\newblock Preprint.

\end{thebibliography}

\vspace{5mm}
\noindent {\sc\small
Microsoft Research, 1 Microsoft Way, Redmond, WA 98052, USA\\
and\\
Department of Mathematics, University of British Columbia,\\
121-1984 Mathematics Road, Vancouver, BC V6T 1Z2, Canada\\}
\url{holroyd(at)math.ubc.ca}  \ \
\url{www.math.ubc.ca/~holroyd}
\end{document}